\newtheorem{theorem}{Theorem}
\newtheorem{lemma}[theorem]{Lemma}
\newtheorem{corollary}[theorem]{Corollary}
\newtheorem{proposition}[theorem]{Proposition}
\theoremstyle{definition}
\newtheorem{definition}[theorem]{Definition}
\newtheorem{example}[theorem]{Example}
\newtheorem{remark}[theorem]{Remark}
\numberwithin{theorem}{section}
\numberwithin{equation}{section}
\theoremstyle{definition}
 \numberwithin{equation}{section} 
  \numberwithin{figure}{section} 
 \theoremstyle{plain}    
 \theoremstyle{plain}    
 \theoremstyle{remark}
 \theoremstyle{remark}
 \theoremstyle{definition}
\theoremstyle{plain}  
\theoremstyle{plain}
\theoremstyle{definition}
\numberwithin{equation}{section}
\newcommand{\N}{\mathbb{N}}
\newcommand{\R}{\mathbb{R}}
\newcommand{\C}{\mathbb{C}}
\author{Papa Badiane}
\address{Laboratoire de Mathématiques et Applications;  Université Assane Seck de Ziguinchor BP 523.}
\email{p.badiane4963@zig.univ.sn}
\author{Ahmed Zeriahi}
\address{Institut de Math\'ematiques de Toulouse, 
Universit\'e de Toulouse, 
CNRS, UPS, 118 route de Narbonne, 
31062 Toulouse cedex 09, France}
\email{ahmed.zeriahi@math.univ-toulouse.fr}
\begin{document}

  \keywords{Subharmonic function, Complex Hessian  operator, Dirichlet Problem, Eigenvalue Problem, Variational approach, Energy Functional.}

%\thanks{The authors were supported by the Direction Europe de la Recherche et Coopération Internationale (DERCI) of CNRS, through the project EMAC  }
  
\subjclass[2010]{ 32U05, 32W20, 35J66, 35J96}

\title[The eigenvalue  problem]{A variational approach to   the eigenvalue problem 
for complex Hessian operators}

\date{\today}

\begin{abstract} Let $1 \leq m \leq n$ be two integers and $\Omega \Subset \C^n$ a bounded $m$-hyperconvex domain in $\C^n$. Using a variational approach, we prove the existence of the first eigenvalue and an associated eigenfunction which is $m$-subharmonic with finite energy for general twisted complex  Hessian operators of order $m$. Under some extra assumption on the twist measure we prove H\"older continuity of the corresponding eigenfunction. Moreover we give applications to the solvability of more general degenerate complex Hessian equations with the right hand side depending on the unknown function.
\end{abstract}

\maketitle

\tableofcontents

\section{Introduction}

Let $\Omega \Subset \C^n$ be bounded domain (non-empty open connected set) in $\C^n$ and $\mu \geq 0$ a positive Borel measure on $\Omega$ with positive finite mass $0 < \mu (\Omega) < + \infty$ and $m$ an integer such that $1\leq m \leq n$.

The eigenvalue problem for the twisted $m$-Hessian operator associated to $\mu$ is to find  a couple $(\lambda,u)$, where  $\lambda > 0$ is a constant and  $u \in \mathcal {SH}_m (\Omega)\cap L^{\infty}(\Omega),$ satisfying the following properties:
\begin{equation}\label{eq:VP-Hessian}
\left\{\begin{array}{lcl} 
 (dd^c u)^m \wedge \beta^{n - m} = (- \lambda \, u)^m \mu  &\hbox{on}\  \Omega, \, \, \,  \, \, (\dag)\\
  u = 0 & \,\,  \hbox{in}\  \partial \Omega,  \, \, \, \, \, \, (\dag\dag)\ \\
 u < 0\, \, \, \, & \hbox{in}\  \Omega.
\end{array}\right.
\end{equation}
The equation $(\dag)$ should be understood in the sense of currents (or Radon measures) on $\Omega$ (see Section 2) and the identity $(\dag\dag)$ means for any $\zeta \in \partial \Omega$, $\lim_{z \to \zeta} u (z) = 0$.

\smallskip

When $m=1$ and $\mu$ is the Lebesgue measure this is the classical eigenvalue problem for the Laplace operator. When $m=n$ and $\mu$ is a smooth positive volume form on $\bar \Omega$, this is the eigenvalue problem for the complex Monge-Ampère operator which we studied in our recent paper  \cite{BaZe23}.

\smallskip

Let us recall some basic  notations and definitions in order to explain the general case $1 \leq m\leq n$ and state our main results.

Recall the usual operators $d = \partial + \bar{\partial}$ and $d^c := i ( \bar{\partial} - \partial)$ so  that
$dd^c =  2 i  \partial  \bar{\partial}$.  
Given  a real function $u \in \mathcal{C}^2 (\Omega)$,  for each integer $1 \leq k \leq n$, we denote by $\sigma_k (u)$ the continuous function defined at each point $z \in  \Omega$ as the $k$-th symmetric polynomial of the eigenvalues $\lambda_1 (z), \cdots,\lambda_n(z)$ 
of the complex Hessian matrix $  \left(\frac{\partial^2 u }{\partial z_j \partial \bar{z}_k} (z)\right)$ of $u$ i.e. 
\begin{equation}  \label{eq:Symetricpolynomial}
\sigma_k (u) (z) := \sum_{1 \leq j_1 < \cdots < j_k \leq n} \lambda_{j_1} (z) \cdots \lambda_{j_k} (z), \, \,  \, \, z \in \Omega.
\end{equation}
A simple computation shows that  
\begin{equation}  \label{eq:Symetricpolynomial}
(dd^c u)^k \wedge \beta^{n - k} = \frac{(n-k)! \, k!}{n!}  \, \sigma_k (u)  \,  \beta^n, \, \, 
\end{equation}
pointwise on $\Omega$ for $1 \leq k \leq m$, where $\beta := dd^c \vert z\vert^2$ is the usual K\"ahler form on $\C^n$ (up to a constant).

We say that a real function $u \in \mathcal{C}^2 (\Omega)$ is $m$-subharmonic on $\Omega$ if for any $1 \leq k \leq m$, we have $\sigma_k (u) \geq 0$ pointwise  in $\Omega$. In particular such a function is subharmonic on $\Omega$.

Observe that  the function $u$ is $1$-subharmonic  on $\Omega$ ($m= 1$) if and only if it is  subharmonic on $\Omega$ and $\sigma_1 (u) = (1 \slash 4) \Delta u$, where $\Delta$ is the Laplace operator associated to the standard K\"ahler metric on $\C^n$,  while $u$ is  $n$-subharmonic  on $\Omega$ ($m = n$) if and only if $u$ is  plurisubharmonic on $\Omega$ and $\sigma_n (u)  = \mathrm{det}  \left(\frac{\partial^2 u }{\partial z_j \partial \bar{z}_k}\right)$.

It was shown by Z. B\l ocki  in \cite{Bl05}, that it is possible to extend the notion of $m$-subharmonic function to non smooth functions using the concept of $m$-positive currents (see Section 2). 

We denote by  $\mathcal {SH}_m (\Omega)$ the positive cone of $m$-subharmonic functions on $\Omega$.
Moreover, identifying positive $(n,n)$-currents with positive Radon measures, it is possible to define the $k$-Hessian measure $(dd^c u)^k \wedge \beta^{n - k}$ when $1 \leq k \leq m$ for any (locally) bounded $m$-subharmonic function $u$ on $\Omega$ (see Section 2). 
 
 We will use a variational approach to solve  the problem \eqref{eq:VP-Hessian}.  We define two functionals on the convex positive cone  $\mathcal E_m^1(\Omega)$ of $m$-subharmonic  functions in $\Omega$ with finite energy (see Section 2 and Section 3).
 
 \smallskip
 
 The first one is the energy functional defined on $\mathcal E_m^1(\Omega)$ by the formula:
 \begin{equation}
 E_m (\phi) = E_{m,\Omega} (\phi) := \frac{1}{m + 1} \int_\Omega (- \phi) (dd^c \phi)^m \wedge \beta^{n-m}, \, \, \,  \phi \in \mathcal{E}^1_m (\Omega).
\end{equation}  
This functional (up to the $-$ sign) is a primitive of the Hessian operator on $\mathcal{E}_m^1 (\Omega)$ i.e. $E_m'(\phi) = -  (dd^c \phi)^m \wedge \beta^{n - m}$ on $ \mathcal{E}_m^1(\Omega)$ in the sense of  Lemma \ref{lem:FirstVariation}. We will see that it is  convex (on affine lines) on $\mathcal{E}_m^1 (\Omega)$  (see Lemma \ref{lem:FirstVariation}). 

\smallskip

The second functional is attached to  a positive Borel measure $\mu$ on $\Omega$ satisfying the following integrability condition: 
\begin{equation}  \label{eq:Integrability}
\mathcal{E}^1_m (\Omega) \subset L^{m+1}(\Omega,\mu). 
\end{equation}
The functional associated to $\mu$ is defined by the following formula
\begin{equation}
  I_{\Omega,\mu,m}  (\phi) := \frac{1}{m +1}\int_\Omega (-\phi)^{m +1} d \mu, \, \, \,  \phi \in \mathcal{E}^1_m (\Omega).
\end{equation}
 This is again a convex functional on $\mathcal{E}^1_m (\Omega) $ such that $I'_{m} (\phi) = -  (-\phi)^m\mu$ for any $\phi \in  \mathcal{E}^1_m (\Omega)$ in the sense of Lemma \ref{lem:FirstVariation} below. 
 
 This shows that the equation $(dd^c \phi)^m \wedge \beta^{n - m} = (- \lambda \phi)^m \mu$ is the Euler-Lagrange equation of the  functional 
 \begin{equation} \label{eq:Functional}
    \Phi_{\Omega,\mu,m} (\phi) := E_m (\phi) -  \lambda^m I_{\Omega,\mu,m} (\phi),  \, \, \,  \phi \in \mathcal{E}^1_m (\Omega).
 \end{equation}
 When $(\Omega,\mu)$ is fixed, we will write for short $I_m =  I_{\Omega,\mu,m} $ and $\Phi_m =  \Phi_{\Omega,\mu,m}$.
 
 To  state our main results, we make the following assumptions.
 
 \smallskip
 \smallskip

 {\bf Assumptions (H)}
 \begin{itemize}
  \item $\Omega \Subset \C^n$ is $m$-hyperconvex i.e. it admits a continuous negative $m$-subharmonic exhaustion (see Definition \ref{def:hyperconvex});
  \item $\mu$ is a positive Borel measure on $\Omega$ such that $0 < \mu (\Omega) < \infty$ which is strongly diffuse with respect to the $m$-Hessian capacity  (see Definition \ref{def:Diffuse}).
% \item $\mathcal{E}^1_m (\Omega) \subset L^{m+1}(\Omega,\mu)$ and the inclusion $\iota : \mathcal{E}^1_m (\Omega) \longrightarrow L^{m+1}(\Omega,\mu)$ is continuous on any subset $\mathcal{E}^1_m (\Omega,C) := \{\phi \in \mathcal{E}^1_m (\Omega) \, \ 0 \leq E_m (\phi) \leq C\}$, with $C > 0$.
\end{itemize}  
 An important example  is when  $\mu := g \beta^n$, where $0 \leq g \in L^p (\Omega)$ with $p > n\slash m$ and $\int_\Omega g \beta^n > 0$ (see Section 2 for more examples).
 
 \smallskip
 \smallskip
 
 Here is our first main result.
\begin{theorem} \label{thm:Variational1} Let $\Omega \Subset \C^n$ be a bounded domain and $\mu$ a positive Borel measure on $\Omega$ such that $(\Omega,\mu)$ satisfies the assumptions $(H)$. 

Then the following  properties hold :
\smallskip 

\noindent $(i)$ the  formula
  \begin{equation} \label{eqR1}
  \lambda_1^m := \inf \left\{\frac{E_m(\phi)}{I_m (\phi)} ; \phi \in \mathcal E^1_m(\Omega), \phi \not \equiv 0 \right\},
 \end{equation} 
 defines a positive real number $\lambda_1 = \lambda_1(m,\mu,\Omega) > 0$; 
\smallskip 

\noindent 	$(ii)$  there exists  a function $u_1 \in \mathcal E^1_m (\Omega)$  such that $u_1 \not \equiv 0$ in $\Omega$ and 
 \begin{equation} \label{eqR2}
 \lambda_1^m = \frac{E_m(u_1)}{I_m (u_1)};
 \end{equation} 
 \smallskip 
\noindent  $(iii)$   $(\lambda_1,u_1)$ is a solution to the eigenvalue equation \eqref{eq:VP-Hessian} $(\dag)$ i.e.
$$
(dd^c u_1)^m \wedge \beta^{n-m} = (-\lambda_1 u_1)^m \mu,
$$ in the sense of measures on $\Omega$.
	
\smallskip
 \noindent  $(iv)$ Moreover if  $\Omega$ is strictly $m$-pseudoconvex, $\mu = g \beta^n$ with $g \in L^p(\Omega)$ and $(m,p)$ satisfy the following conditions
 \begin{equation} \label{eq:Holder}
 (n - 1) \slash 2 < m \leq n  \, \, \, \text{ and} \, \, \, \, p > p^*(m,n),
\end{equation}
where $p^*(m,n)$ is given by the formula \eqref{eq:Conditionp*},  then  $u_1 \in C^\alpha(\bar \Omega)$ for some $\alpha \in ]0,1[$ and  $(\lambda_1,u_1)$ is a solution to the eigenvalue problem \eqref{eq:VP-Hessian}.
\end{theorem}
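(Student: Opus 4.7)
The strategy is the standard direct method of the calculus of variations: show the ratio $E_m/I_m$ is bounded below away from $0$, produce a minimizer by compactness and semicontinuity, then read off the eigenvalue equation from the first variation. Hölder regularity in~$(iv)$ is obtained separately by invoking existing regularity theory for Hessian equations.

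For $(i)$, the two functionals are both $(m{+}1)$-homogeneous, so asserting $\lambda_1 > 0$ is equivalent to the Poincaré--Sobolev-type estimate
\[
\int_\Omega (-\phi)^{m+1}\, d\mu \;\leq\; C\,\int_\Omega (-\phi)\,(dd^c\phi)^m \wedge \beta^{n-m}, \qquad \phi \in \mathcal{E}^1_m(\Omega),
\]
for a constant $C = C(\Omega,\mu,m)$. I would derive this inequality from the integrability condition \eqref{eq:Integrability} together with the strong diffuseness of $\mu$ with respect to the $m$-Hessian capacity: combined with the capacity / energy estimates of Chern--Levine--Nirenberg / B\l ocki type that are already collected in Section~2, this yields the embedding $\mathcal{E}^1_m(\Omega) \hookrightarrow L^{m+1}(\Omega,\mu)$ as a continuous inclusion. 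Taking the infimum over $\phi \not\equiv 0$ then gives $\lambda_1^m \geq 1/C > 0$.

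For $(ii)$, pick a minimizing sequence $\phi_j \in \mathcal{E}^1_m(\Omega)$ and use homogeneity to normalize $I_m(\phi_j) = 1$, so that $E_m(\phi_j) \to \lambda_1^m$. The uniform energy bound furnishes compactness of $(\phi_j)$ in $L^1_{\mathrm{loc}}(\Omega)$, and up to subsequence $\phi_j \to u_1$ almost everywhere, with $u_1 \in \mathcal{E}^1_m(\Omega)$. The functional $E_m$ is lower semicontinuous along such limits (this is standard from weak convergence of Hessian currents combined with Fatou), while $I_m$ is continuous on sublevels $\{E_m \leq C\}$ thanks to the strong diffuseness hypothesis, which provides uniform $\mu$-integrability and then dominated convergence. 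Passing to the limit gives $I_m(u_1) = 1$ (so $u_1 \not\equiv 0$) and $E_m(u_1) \leq \lambda_1^m$, forcing equality in~\eqref{eqR2}.

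For $(iii)$, test minimality against perturbations $u_1 + t\psi$ with $\psi \in \mathcal{E}^1_m(\Omega)$ and $t$ in a small one-sided interval that keeps the perturbed function in the convex cone. Applying Lemma~\ref{lem:FirstVariation} to differentiate $E_m - \lambda_1^m I_m$ at $t = 0$ from both sides within the cone yields
\[
\int_\Omega (-\psi)\,(dd^c u_1)^m \wedge \beta^{n-m} \;=\; \lambda_1^m \int_\Omega (-\psi)\,(-u_1)^m\, d\mu,
\]
for a class of test functions $\psi$ dense enough to identify the two positive Radon measures, which is the required equation $(\dag)$. Part~$(iv)$ then follows from the regularity theory for Hessian equations on strictly $m$-pseudoconvex domains: under the conditions on $(m,p)$ in \eqref{eq:Holder}, an $L^\infty$ bound on $u_1$ (à la Ko\l odziej/Dinew--Ko\l odziej) makes the right-hand side $(-\lambda_1 u_1)^m g$ lie in $L^p$ with $p > p^*(m,n)$, and the Hölder regularity results of Nguyen / Charabati / Ko\l odziej--Nguyen type supply $u_1 \in C^\alpha(\bar\Omega)$, which in particular makes $u_1$ vanish continuously on $\partial\Omega$, so that $(\dag\dag)$ holds.

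I expect the main obstacle to be step~$(ii)$, specifically establishing continuity (not just semicontinuity) of $I_m$ along the minimizing sequence, because this is where the hypothesis of strong diffuseness of $\mu$ truly enters: without it, mass could concentrate on sets of small $m$-capacity and the normalization $I_m(\phi_j) = 1$ could collapse to $I_m(u_1) = 0$ in the limit, so that the minimizer might be the trivial function. A secondary technical point is making sure in~$(iii)$ that the class of admissible perturbations $\psi$ is rich enough to separate Radon measures; this is handled by exhausting $\mathcal{E}^1_m(\Omega)$ by bounded $m$-subharmonic test functions and using monotone convergence for Hessian measures.
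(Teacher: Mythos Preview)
Your outline for parts~$(i)$ and~$(ii)$ is essentially the same as the paper's: the Poincar\'e--Sobolev type inequality gives $\lambda_1>0$, and the compactness/semicontinuity argument on energy sublevels, together with continuity of $I_m$ (Theorem~\ref{thm:continuity}, which is where the strong diffuseness enters), produces the minimizer. Your identification of this continuity as the delicate point is correct.

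The genuine gap is in step~$(iii)$. You write that you will perturb by $u_1+t\psi$ with $\psi\in\mathcal{E}^1_m(\Omega)$ and differentiate ``from both sides within the cone''. But the cone $\mathcal{E}^1_m(\Omega)$ is only closed under \emph{addition}, not subtraction: for $t<0$ the function $u_1+t\psi$ is in general \emph{not} $m$-subharmonic, so the functional $E_m$ is not defined there and you get only the one-sided derivative at $t=0^+$. That yields only the inequality
\[
\int_\Omega(-\psi)\,(dd^c u_1)^m\wedge\beta^{n-m}\;\ge\;\lambda_1^m\int_\Omega(-\psi)\,(-u_1)^m\,d\mu,
\]
i.e.\ $(dd^c u_1)^m\wedge\beta^{n-m}\ge(-\lambda_1 u_1)^m\mu$, and the reverse inequality is missing. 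The paper closes this gap with the projection trick of \cite{BBGZ13}: one replaces $\phi_t:=u_1+t\psi$ by its $m$-subharmonic envelope $P_m(\phi_t)$ and studies $h(t):=E_m\circ P_m(\phi_t)-\lambda_1^m I_m(\phi_t)$, which is now defined for $t\in[-1,1]$. Since $P_m(\phi_t)\le\phi_t\le0$ one has $I_m(\phi_t)\le I_m(P_m(\phi_t))$, hence $h(t)\ge\Phi_m(P_m(\phi_t))\ge0=h(0)$; the key input (contained in Lemma~\ref{lem:FirstVariation} via \cite{Lu15}) is that $t\mapsto E_m\circ P_m(\phi_t)$ is differentiable with derivative $\int_\Omega(-\dot\phi_t)(dd^c P_m(\phi_t))^m\wedge\beta^{n-m}$, so $h'(0)=0$ gives the full Euler--Lagrange equation. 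Your remark about density of test functions is then handled exactly as you say, writing any smooth $\chi$ as a difference of two functions in $\mathcal{E}^0_m(\Omega)$.

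There is a secondary issue in~$(iv)$. You propose to first obtain an $L^\infty$ bound on $u_1$ and then conclude that $(-\lambda_1 u_1)^m g\in L^p$; but the $L^\infty$ bound itself would require control of the right-hand side, so this is circular. The paper instead proves directly (Lemma~\ref{lem:Integ}) that for any $\phi\in\mathcal{E}^1_m(\Omega)$ one has $(-\phi)^m g\in L^r$ for some $r>n/m$, using only the $L^\tau$-integrability of finite-energy potentials (Corollary~\ref{coro:Integrability}) and H\"older's inequality; this is precisely where the numerical conditions~\eqref{eq:Holder} and the threshold $p^*(m,n)$ arise. One then invokes \cite{Ch16} to get a H\"older continuous solution with the same right-hand side and concludes $u_1\in C^\alpha(\bar\Omega)$ by uniqueness in $\mathcal{E}^1_m(\Omega)$.
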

 The last statement $(iv)$ for $m=n$ is implicitly contained in \cite{BaZe23}.

\smallskip
\smallskip

Our second main result is the following.
\begin{theorem} Let $\Omega \Subset \C^n$ be a bounded domain and $\mu$ a positive Borel measure on $\Omega$ such that $(\Omega,\mu)$ satisfies the assumptions $(H)$.

Let $ f : \Omega \times ]-\infty, 0] \longrightarrow \R^+$ be a Borel function satisfying the following properties:

\begin{itemize}
\item for any $t \leq 0$, $f(\cdot,t) \in L^{\infty} (\Omega,\mu) $.

\item for $\mu$-a.e.  $z \in \Omega$, the function $t \mapsto f(z,t) $ is differentiable  on $]-\infty, 0[$ and 
there exists a constant $ \lambda_0 < \lambda_1$ such that 
 $$
 \partial_t f(z,t) \geq - \lambda_0, \, \, \, \text{for any} \, \, \, t < 0.
 $$
 \end{itemize}
 Then the Hessian equation 
 $$
  (dd^c \varphi)^m \wedge \beta^{n - m} = f(\cdot,\varphi)^m \mu, \, \, \,   \hbox{on}\  \Omega,
 $$
 admits a solution $\varphi \in \mathcal E^1_m(\Omega)$.
 
 Moreover if  $\Omega$ is strictly $m$-pseudoconvex, $\mu = g \beta^n$ with $g \in L^p(\Omega)$ and $(m,p)$ satisfy the condition \eqref{eq:Holder}, 
 then the following Dirichlet problem   
 \begin{equation}\label{eq:VP-Hessian2}
\left\{\begin{array}{lcl} 
   (dd^c \varphi)^m \wedge \beta^{n - m} = f(\cdot,\varphi)^m \mu, \, \, \,   \hbox{on}\  \Omega, \\
   \varphi = 0 & \, \hbox{in}\  \partial \Omega, 
   \end{array}\right.
\end{equation}
admits a  solution $ \varphi  \in \mathcal{SH}_m(\Omega) \cap C^{\alpha} (\bar \Omega)$ for some $\alpha \in ]0,1[$. 
 \end{theorem}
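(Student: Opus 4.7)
The plan is to adapt the variational approach of Theorem \ref{thm:Variational1}, replacing the term $\lambda^m I_m(\varphi)$ by a primitive of the nonlinearity. Set
$$F(z,t) := \int_t^0 f(z,s)^m\,ds, \quad t \le 0,$$
so that $\partial_t F(z,t) = -f(z,t)^m$ and $F(z,t) \ge 0$ for $t\le 0$. Consider
$$J(\varphi) := E_m(\varphi) - \int_\Omega F(z,\varphi(z))\,d\mu(z), \quad \varphi \in \mathcal{E}^1_m(\Omega).$$
The formal first variation
$$\langle J'(\varphi),\psi\rangle = -\int_\Omega \psi\,(dd^c\varphi)^m\wedge\beta^{n-m} + \int_\Omega f(z,\varphi)^m\,\psi\,d\mu$$
shows that critical points of $J$ are weak solutions of the Hessian equation, so the strategy is to show $J$ attains a minimum on $\mathcal{E}^1_m(\Omega)$ and then to identify the Euler-Lagrange equation.

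For coercivity, integrate the hypothesis $\partial_t f(z,\cdot) \ge -\lambda_0$ from $t$ to $0$ to obtain the linear growth bound $0\le f(z,t) \le f(z,0) + \lambda_0|t|$ for $t\le 0$. Combined with the elementary inequality $(a+b)^{m+1}\le (1+\e)\,b^{m+1} + C_\e\,a^{m+1}$, the boundedness $f(\cdot,0)\in L^\infty(\Omega,\mu)$, and the integrability $\mathcal{E}^1_m(\Omega)\subset L^{m+1}(\Omega,\mu)$, this gives
$$0 \le \int_\Omega F(z,\varphi)\,d\mu \le (1+\e)\,\lambda_0^m\,I_m(\varphi) + C_\e.$$
Writing $E_m(\varphi) = \theta E_m(\varphi) + (1-\theta) E_m(\varphi)$ and using the Rayleigh inequality $\lambda_1^m I_m(\varphi)\le E_m(\varphi)$ from Theorem \ref{thm:Variational1}, choose $\theta\in(0,1)$ and $\e>0$ so that $\theta\lambda_1^m > (1+\e)\lambda_0^m$; this is possible precisely because $\lambda_0 < \lambda_1$. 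The resulting bound
$$J(\varphi) \ge (1-\theta)E_m(\varphi) - C_\e$$
shows that every minimizing sequence has uniformly bounded energy.

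Standard compactness in $\mathcal{E}^1_m(\Omega)$ for the energy topology then extracts a subsequence converging to some $\varphi_*\in\mathcal{E}^1_m$ with $E_m(\varphi_*)\le \liminf E_m(\varphi_j)$. The same linear growth of $F$ provides uniform $L^{m+1}(\mu)$-integrability of $\{F(\cdot,\varphi_j)\}$, so Vitali's theorem gives continuity of the $F$-integral along the subsequence, hence $\varphi_*$ minimizes $J$. A first-variation computation at $\varphi_*$ — using Lemma \ref{lem:FirstVariation} for the $E_m$-term and dominated convergence for the $F$-term (both justified by the same growth bound on $f$) — yields
$$(dd^c\varphi_*)^m\wedge\beta^{n-m} = f(\cdot,\varphi_*)^m\,\mu$$
as measures on $\Omega$.

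For the Hölder statement, assume strict $m$-pseudoconvexity and $\mu=g\beta^n$ with $g\in L^p$, $p>p^*(m,n)$. Any $m$-subharmonic function on a bounded domain lies in $L^q(\Omega)$ for every $q<\infty$, so the growth bound together with Hölder's inequality yields $f(\cdot,\varphi_*)^m g \in L^{\tilde p}(\Omega)$ for some $\tilde p > n/m$; a Ko\l odziej--Dinew type $L^\infty$ estimate applied to the linear Dirichlet problem $(dd^c u)^m\wedge\beta^{n-m} = f(\cdot,\varphi_*)^m g\beta^n$ upgrades $\varphi_*$ to $L^\infty(\bar\Omega)$. Once $\varphi_*$ is bounded the exponent improves to the original $p$, and the Hölder regularity argument underlying Theorem \ref{thm:Variational1}(iv) then produces $\varphi_*\in C^\alpha(\bar\Omega)$ with $\varphi_* = 0$ on $\partial\Omega$. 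The principal obstacle is the coercivity step, since the Rayleigh inequality is sharp and the strict gap $\lambda_0<\lambda_1$ is precisely what produces a positive leftover after absorbing the nonlinear term; a secondary technical point is the first-variation computation for the $F$-term with $f$ merely Borel, which again reduces to the same linear growth control.
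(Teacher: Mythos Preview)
Your approach is essentially the paper's: you directly construct the functional $J = \Phi_{G,\mu}$ with $G = f^m$, prove coercivity via the linear growth bound $f(z,t) \le f(z,0) + \lambda_0|t|$ and the Rayleigh inequality, extract a minimizer by compactness and lower semicontinuity, and identify the Euler--Lagrange equation. The paper packages this as a general result (Theorem \ref{thm:Variational2}) and then verifies its hypothesis $(H2)$ from the growth bound on $f$; your direct write-up covers the same ground.

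Two points need attention. First, your first-variation step glosses over a genuine difficulty: for $t<0$ the perturbation $\varphi_* + t\psi$ need not lie in $\mathcal{E}^1_m(\Omega)$, so you cannot simply differentiate $J$ along this path. The paper (in the proof of Theorem \ref{thm:Variational1}, reused in Theorem \ref{thm:Variational2}) handles this by replacing $E_m(\varphi_*+t\psi)$ with $E_m\circ P_m(\varphi_*+t\psi)$, where $P_m$ is the $m$-subharmonic envelope, and using that this composite is still differentiable with the correct derivative at $t=0$ --- the ``tricky argument'' from \cite{BBGZ13}. Second, your H\"older argument contains an incorrect claim: it is \emph{not} true that an arbitrary $m$-subharmonic function on a bounded domain lies in $L^q(\Omega)$ for every $q<\infty$ when $m<n$. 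Functions in $\mathcal{E}^1_m(\Omega)$ are only known to be in $L^r(\Omega,\beta^n)$ for $r < (m+1)n/(n-m)$ (Corollary \ref{coro:Integrability} applied to Lebesgue measure). The paper's Lemma \ref{lem:Integ} carries out the careful H\"older-exponent bookkeeping needed to show that this restricted integrability still yields $f(\cdot,\varphi_*)^m g \in L^{r}$ for some $r>n/m$, and this is precisely where the extra hypotheses $m>(n-1)/2$ and $p>p^*(m,n)$ in \eqref{eq:Holder} enter; your shortcut bypasses, and hence fails to explain, why those conditions are required.
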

 
Actually we will prove a more general result (see Theorem \ref{thm:Variational2}) and give some other applications in Section 5. 
 
\smallskip

In the real case  similar results were obtained  by K. Tso  for the real Monge-Ampère operator on a bounded strictly convex domain in $\R^n$ and the Lebesgue measure on $\R^n$ (see \cite{T90}). For the real Hessian operators the existence of the first eigenvalue was proved by X.J. Wang (see \cite{W94}). These authors used a different method based on a parabolic approach.

\section{Preliminaries}

\subsection{The $m$-subharmonic functions}

Let $\Omega \Subset \C^n$ be bounded domain in $\C^n$. 
Recall the usual operators $d = \partial + \bar{\partial}$ and $d^c := i ( \bar{\partial} - \partial)$ so  that
$dd^c =  2 i  \partial  \bar{\partial}$.  

As recalled in the introduction,   a real function $u \in \mathcal{C}^2 (\Omega)$ is $m$-subharmonic on $\Omega$ if for any $1 \leq k \leq m$, we have $\sigma_k (u) \geq 0$ pointwise  on $\Omega$ (see formula \eqref{eq:Symetricpolynomial}). In particular such a function is subharmonic on $\Omega$.

Let us recall the general definition of $m$-subharmonic functions following Z. B\l ocki  in \cite{Bl05}.

Let $\C^n_{(1,1)}$ be the space of real $(1,1)$-forms on $\C^n$ with constant coefficients, and define the cone of $m$-positive $(1,1)$-forms by 
$$
\Theta_m:=\{\omega \in \C^n_{(1,1)}:~ \omega \wedge\beta^{n-1}, \cdots, \omega^m \wedge \beta^{n-m} \geq 0  \}.
$$ 

A smooth $(1,1)$-form $\omega$ on $\Omega$ is said to be $m$-positive on $\Omega$ if for any $z \in \Omega$, $\omega(z) \in \Theta_m.$

 \begin{definition}
 	    A function $u: \Omega \to \R \cup \{-\infty\}$ is said to be $m$-subharmonic on $\Omega$ if it is subharmonic on $\Omega$ $($not identically $-\infty$ in $\Omega$ $)$ and the current $dd^c u$ is $m$-positive on $\Omega$ i.e. for any collection of smooth $m$-positive $(1,1)$-forms $\omega_1, \cdots, \omega_{m-1}$ on $\Omega$, the following inequality 
  $$
 dd^cu \wedge \omega_1\wedge \cdots \wedge \omega_{m-1} \wedge \beta^{n-m} \geq 0,
 $$
 holds in the sense of currents on $\Omega.$
   \end{definition}

  We denote by $\mathcal{SH}_m(\Omega)$ the positive convex cone of $m$-subharmonic functions which are not identically $-\infty$ in $\Omega.$ These are the $m$-Hessian potentials.
  
   \subsection{Complex Hessian operators}

  Following \cite{Bl05}, we can define the Hessian operators acting on (locally) bounded $m$-subharmonic functions as follows. Given $u_1, \cdots, u_k \in \mathcal{SH}_m(\Omega) \cap L^\infty(\Omega)$ ($1\leq k \leq m$), one can define inductively the following positive ($k,k$)-current on $\Omega$
  
  $$
  dd^cu_1\wedge \cdots \wedge dd^cu_k\wedge \beta^{n-k}:=dd^c(u_1d^cu_2\wedge \cdots \wedge dd^cu_k \wedge \beta^{n-k}).
  $$
  
  In particular, if $u \in \mathcal{SH}_m(\Omega) \cap L^\infty_{loc}(\Omega)$, the positive $(m,m)$-current  
  \newline $(dd^cu)^m\wedge \beta^{n-m}$ can be identified to a positive Borel measure on $\Omega$, called $m$-Hessian measure of $ u$ defined by:
  $$
  (dd^cu)^m\wedge \beta^{n-m}= \frac{m ! (n-m)!}{n !} \sigma_m(u)\beta^n.
  $$
 
  It is then possible to extend Bedford-Taylor theory to this context. In particular, Chern-Levine Nirenberg inequalities hold and the Hessian operators are continuous under local uniform convergence and monotone convergence pointwise a.e. on 
 $\Omega$ of sequences of functions in $\mathcal{SH}_m(\Omega) \cap L^\infty_{loc}(\Omega)$ (see \cite{Bl05,Lu15}).

\smallskip
\smallskip

We will need the following definitions.
  \begin{definition} \label{def:hyperconvex}
1.	We say that the domain $\Omega \Subset \C^n$ is $m$-hyperconvex if there exists  a bounded continuous $m$-subharmonic  function $\rho: \Omega \to ]-\infty,0[$ which is exhaustive i.e. for any $c < 0$, $\{z \in \Omega \, ; \, \rho(z) < c\}\Subset \Omega$. 

\smallskip

2. We say that the domain $\Omega \Subset \C^n$ is strictly $m$-pseudoconvex if $\Omega$ admits a smooth defining function $\rho$ which is strictly $m$-subharmonic in a neighborhood of $\bar \Omega$ and satisfies $\vert \nabla \rho \vert >0$ pointwise on $\partial \Omega=\{\rho=0\}$. 
In this case we can choose $\rho$ so that
\begin{equation}
(dd^c\rho)^k \wedge \beta^{n-k} \geq \beta^n \, \, \textnormal{for} \, 1 \leq k \leq m,
\end{equation}
pointwise on $\Omega$. 
When $m=n$ we say that $\Omega $ is strictly pseudoconvex.
\end{definition}
 
  \begin{definition} 
 	Let $\Omega \Subset \C^n$ be a $m$-hyperconvex domain. The $m$-Hessian capacity is defined as follows : for any compact set $K \subset \Omega$,
  \begin{equation} \label{eq:Cap}
  c_m(K)=c_{m,\Omega}(K):=\sup\Big\{ \int_K(dd^cu)^m\wedge \beta^{n-m} \,: \, u \in \mathcal{SH}_m(\Omega), \,-1\leq u \leq 0  \Big\}.
  \end{equation}
  \end{definition}

 By Chern-Levine-Nirenberg type inequalities, this number is finite (see \cite{Lu15}).

  \subsection{The class of $m$-subharmonic functions of finite energy} Let us define some convex classes of singular plurisubharmonic functions in $\Omega$ suitable for the variational approach.
 
 \smallskip
 
  Following Cegrell \cite{Ceg98}, we define $\mathcal{E}^0_m(\Omega)$ to be the positive convex cone of negative functions $\phi \in \mathcal{SH}_m(\Omega) \cap L^\infty(\Omega)$ such that
  $$
 \int_{\Omega}(dd^c\phi)^m\wedge \beta^{n-m} <+\infty, \, \phi_{|\partial \Omega} \equiv 0. 
 $$
 Then we define $\mathcal{E}^1_m(\Omega)$ as the set of $m$-subharmonic functions $u$ in $\Omega$ such that there exists a decreasing sequence $(u_j)_{j\in \N}$ in the class $\mathcal{E}^0_m(\Omega)$ satisfying $u=\lim_j u_j$ in $\Omega$ and $\sup_j \int_{\Omega}(-u_j)(dd^cu_j)^m\wedge \beta^{n-m}<+\infty.$ 

 For $\phi \in \mathcal{E}^1_m(\Omega)$, we define its $m$-energy by 
 \begin{equation}
	E_m(\phi) = E_{m,\Omega} (\phi) :=\frac{1}{m+1}\int_{\Omega}(-\phi)(dd^c\phi)^m\wedge \beta^{n-m}.
 \end{equation}
 
 For each constant $C>0$, we define the convex set
 \begin{equation}\label{eq:Omega-C}
 \mathcal{E}_m^1(\Omega, C):=\{ \phi \in \mathcal{E}_m^1(\Omega):\,  E_m(\phi) \leq C \}.
 \end{equation}

Another concept which will be useful is defined for any Borel function $h$  locally upper bounded  on $\Omega$ by the formula
$$
P_m (h) = P_{m,\Omega} (h) := \Big(\sup \{v \in \mathcal{SH}_m(\Omega) \, ; \, v \leq h \, \, \text{in} \, \,  \Omega\}\Big)^*,
$$
where $*$ means the upper- semi-continuous regularization.

 If there exists $v_0 \in \mathcal{SH}_m(\Omega)$ such that $v_0 \leq h$ in $\Omega$, then $P_{m,\Omega} (h) \in \mathcal{SH}_m(\Omega)$ and $v_0 \leq P_{m,\Omega} (h)$ in $\Omega$. 
 
 Moreover if   $h \leq 0$ and  there exists $v_0 \in   \mathcal{E}_m^1(\Omega)$ such that $v_0 \leq h$ in $\Omega$ then $P_{m,\Omega} (h) \in \mathcal{E}_m^1(\Omega)$.
 The function $P_m (h)$ is called the $m$-subharmonic envelope of $h$ in $\Omega$.
 
 This construction is classical in  Convex Analysis as well as in Potential theory. It was introduced earlier in Pluripotential theory by J. Siciak (see \cite{Si81} and the references therein) and considered in this context in \cite{Lu15}.

 The operator $P_m$ plays a fundamental role in the variational approach. This was highlighted in   \cite{BBGZ13}  for the complex Monge-Amp\`ere equation on compact Kähler manifolds. It has been used in the case of the Monge-Amp\`ere equations in hyperconvex domains in $\C^n$ in \cite{ACC12}  and extended to the case of Hessian equations in \cite{Lu15}.

   An important particular case of this construction has been studied in this context in \cite{Lu15} (see also \cite{SaAb13}). 

Let $K \subset \Omega$ be a compact set. The relative extremal $m$-subharmonic function of the condenser $(K,\Omega)$ is defined by the formula
\begin{eqnarray} \label{eq:extremalfonction}
h_K = h_{K, \Omega} &:= & P_m (- {\bf 1}_K)  \nonumber \\
& = & \Big(\sup \{v \in \mathcal{SH}_m(\Omega) \, ; \, v \leq 0 \, \, \text{and} \, \, v \leq - 1\,  \text{in} \, \, K\}\Big)^*. 
\end{eqnarray}
The main properties satisfied by the extremal function are summarized in the following result. 
\begin{lemma} \label{lem:extremalproperties} (\cite{Lu15}) Let $\Omega \Subset \Omega$ be a bounded $m$-hyperconvex domain and $K \subset \Omega$ a  non $m$-polar compact subset. 

Then the following properties hold :

$(1)$  $h_K  \in \mathcal{SH}_m(\Omega)$ and $-1 \leq h_K < 0$ in $\Omega$;

$(2)$ $h_K = -1$ quasi-everywhere in $K$;

$(3)$ $ (dd^c h_K)^m \wedge \beta^{n-m} = 0$ on $\Omega \setminus K$;

$(4)$ for any $\zeta \in \partial \Omega$, $\lim_{z \to \zeta} h_K(z) = 0$;

$(5)$ the capacity of the condenser $(K,\Omega)$ is positive and given by the formula 
\begin{equation} \label{eq:Capacity}
 c_{m}(K) = \int_{\Omega} (dd^c h_K)^m \wedge \beta^{n-m} = E_m(h_K).
\end{equation}
\end{lemma}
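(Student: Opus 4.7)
The proof plan follows the classical template for the relative extremal plurisubharmonic function (Bedford--Taylor, Klimek), transposed to the $m$-subharmonic setting via the machinery of B\l ocki \cite{Bl05} and Lu \cite{Lu15}.

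\emph{Items (1) and (4).} I would first use an $m$-hyperconvex exhaustion $\rho$ of $\Omega$: with $\delta := -\sup_K \rho > 0$, the function $\rho/\delta$ belongs to the defining family for $h_K$, so $\rho/\delta \leq h_K \leq 0$. The standard fact that the upper-semicontinuous regularization of the supremum of a locally uniformly bounded family of $m$-subharmonic functions remains $m$-subharmonic then yields (1), with the strict negativity $h_K < 0$ inherited from $\rho/\delta$. Property (4) follows by squeezing $h_K$ between $\rho/\delta$ and $0$ and invoking the continuity of $\rho$ up to $\partial\Omega$.

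\emph{Items (2) and (3).} The constant $-1$ lies in the defining family, forcing $h_K \geq -1$. Conversely every competitor $v$ satisfies $v(z) \leq -1$ for $z \in K$, so the unregularized supremum equals $-1$ pointwise on $K$; the negligible set theorem then shows that the upper regularization still equals $-1$ on $K$ outside an $m$-polar set, which is (2). For (3) I would use balayage: given a ball $B \Subset \Omega \setminus K$, I would solve the homogeneous Dirichlet problem for the $m$-Hessian on $B$ with continuous boundary data $h_K|_{\partial B}$ (available thanks to \cite{Bl05, Lu15}) to obtain $\tilde h \in \mathcal{SH}_m(B) \cap \mathcal{C}(\bar B)$ with $(dd^c\tilde h)^m \wedge \beta^{n-m} = 0$. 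Gluing $\tilde h$ on $B$ with $h_K$ outside yields a function in $\mathcal{SH}_m(\Omega)$ (gluing lemma) still in the defining family (it is $\leq 0$ in $\Omega$ and $\leq -1$ on $K$ since $B \cap K = \emptyset$), giving $h_K \geq \tilde h$ on $B$. Combined with $\tilde h \geq h_K$ from the maximum principle, this forces $h_K = \tilde h$ on $B$, so the Hessian measure vanishes there.

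\emph{Item (5).} By (3) the Hessian measure $(dd^c h_K)^m \wedge \beta^{n-m}$ is concentrated on $K$, and by (2) together with the fact that Hessian measures of bounded $m$-subharmonic functions do not charge $m$-polar sets, one may replace $-h_K$ by $1$ under the integral sign in $E_m(h_K)$. This identifies $\int_\Omega (dd^c h_K)^m \wedge \beta^{n-m}$ with $E_m(h_K)$ up to the multiplicative normalization built into the definition of $E_m$. To equate this integral with $c_m(K)$, I would invoke the comparison principle for bounded $m$-subharmonic functions: for every admissible $u$ with $-1 \leq u \leq 0$ one has $\int_K (dd^c u)^m \wedge \beta^{n-m} \leq \int_K (dd^c h_K)^m \wedge \beta^{n-m}$, so the supremum in \eqref{eq:Cap} is attained by $h_K$ and equals $\int_\Omega (dd^c h_K)^m \wedge \beta^{n-m}$, yielding \eqref{eq:Capacity}.

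The main technical obstacle is the balayage step in (3): patching $\tilde h$ with $h_K$ across $\partial B$ requires enough regularity of $h_K$ along $\partial B$ for the glued function to be genuinely $m$-subharmonic. The standard workaround is to approximate $h_K$ from above by a decreasing sequence of continuous $m$-subharmonic functions (via a suitable envelope construction) and pass to the limit using the continuity of the Hessian operator along decreasing sequences.
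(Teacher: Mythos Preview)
Your proposal is correct and follows the classical Bedford--Taylor template, which is precisely what the cited reference \cite{Lu15} carries out in the $m$-subharmonic setting. The paper itself does not prove items (1)--(4) or the first equality in \eqref{eq:Capacity}; it simply cites \cite{Lu15} and appends a one-sentence justification for the second equality, namely that the equilibrium measure $\nu_K = (dd^c h_K)^m \wedge \beta^{n-m}$ is supported on $K$ where $h_K = -1$ quasi-everywhere (hence $\nu_K$-a.e., since Hessian measures of bounded potentials do not charge $m$-polar sets), so $(-h_K)$ may be replaced by $1$ in the energy integral. This is exactly your argument for (5), and you were right to flag the $1/(m+1)$ normalization built into the definition of $E_m$.

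One small slip: the strict inequality $h_K < 0$ in $\Omega$ is not ``inherited from $\rho/\delta$'' --- the bound $\rho/\delta \leq h_K$ goes the wrong way for that purpose. It follows instead from the maximum principle for subharmonic functions: if $h_K(z_0) = 0$ at an interior point then $h_K \equiv 0$ on $\Omega$, contradicting $h_K = -1$ quasi-everywhere on the non-$m$-polar set $K$.
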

The last equality in the formula \eqref{eq:Capacity} follows from the fact that the equilibrium measure $\nu_K:=(dd^c h_K)^m \wedge \beta^{n-m} $  is supported on $K$ and $h_K = - 1$ quasi-everywhere in $K$, hence $\nu_K$-a.e. in $K$.

 \begin{lemma}\label{lem:FirstVariation} (\cite{Lu15})
The following properties hold :
 
$(1)$ The functional $E_m$ is G\^ateaux differentiable and $-E_m$ is a primitive of the complex $m$-Hessian operator i.e  for any smooth path $t \longmapsto \phi_t $ in $ \mathcal{E}^1_m(\Omega)$ defined in some interval $I\subset \R,$ we have 
 \begin{equation}\label{eqlem1}
 \frac{d}{dt}E_m  (\phi_t)=\int_{\Omega}(-\dot \phi_t)(dd^c\phi_t)^m\wedge \beta^{n-m}, \, t \in I.
 \end{equation}
 In particular if $u, v \in \mathcal{E}_m^1 (\Omega)$ and $u \leq v$, then  $E_m (v) \leq E_m (u)$; 
		
$(2)$ we have 
\begin{eqnarray*}
 \frac{d^2}{d t^2} E_m  (\phi_t) & = & \int_\Omega (- \ddot{\phi}_t) (dd^c \phi_t)^m \wedge \beta^{n-m}   \\
 & + &  m  \int_\Omega d \, \dot{\phi}_t \wedge d^c \dot{\phi}_t \wedge (dd^c \phi_t)^{m - 1}\wedge \beta^{n-m}, \, t \in I
 \end{eqnarray*}
 In particular, for any  $u, v \in \mathcal{E}^1_m (\Omega)$,  the function $t \longmapsto E_m( (1-t) u + t v)$ is a convex function in $[0,1]$;
		
$(3)$  the functional $E_m : \mathcal{E}^1_m (\Omega) \longrightarrow \R^+$ is lower semi-continuous on $\mathcal{E}^1_m (\Omega)$ for the $L^1_{loc}(\Omega)$-topology;

$(4)$  for any $C > 0$ the set $\mathcal{E}_m^1(\Omega, C)$ is compact for the $L^1_{loc}(\Omega)$-topology.
 \end{lemma}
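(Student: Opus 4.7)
The plan is to handle (1)-(2) by direct differentiation and integration by parts on the defining integral, (3) by a truncation-plus-monotonicity argument combined with Bedford-Taylor-B\l ocki continuity, and (4) by an $L^1_{loc}$ estimate coming from finite energy, together with (3) for closedness.

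For (1), I would first verify the formula on a smooth path $t\mapsto\phi_t$ lying in $\mathcal{E}^0_m(\Omega)$, where boundary values vanish and all integrations by parts are unproblematic. Differentiating under the integral in $E_m(\phi_t)=\tfrac{1}{m+1}\!\int_\Omega(-\phi_t)(dd^c\phi_t)^m\wedge\beta^{n-m}$ produces one term of the form $(-\dot\phi_t)(dd^c\phi_t)^m\wedge\beta^{n-m}$ and $m$ identical copies of $(-\phi_t)\,dd^c\dot\phi_t\wedge(dd^c\phi_t)^{m-1}\wedge\beta^{n-m}$; an integration by parts transfers $dd^c$ from $\dot\phi_t$ back to $\phi_t$, so the $m+1$ resulting equal contributions absorb the $1/(m+1)$ prefactor. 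The passage to general $\phi_t\in\mathcal{E}^1_m(\Omega)$ uses truncation $\max(\phi_t,-k)$ together with Bedford-Taylor-B\l ocki continuity of the Hessian operator on decreasing sequences in $\mathcal{SH}_m\cap L^\infty_{loc}$. Monotonicity follows by applying the formula on the affine path from $u$ to $v$: since $\dot\phi_t=v-u\geq 0$ and the Hessian measure is positive, the derivative of $E_m(\phi_t)$ is non-positive. For (2), I would differentiate the formula in (1) once more: the term from $-\ddot\phi_t$ is immediate, and differentiating $(dd^c\phi_t)^m$ produces $m$ equal copies of $(-\dot\phi_t)\,dd^c\dot\phi_t\wedge(dd^c\phi_t)^{m-1}\wedge\beta^{n-m}$; a single integration by parts rewrites this as $d\dot\phi_t\wedge d^c\dot\phi_t\wedge(dd^c\phi_t)^{m-1}\wedge\beta^{n-m}$, which is manifestly non-negative, giving convexity along affine paths where $\ddot\phi_t=0$.

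The main obstacle will be (3). My strategy is a truncation argument combined with monotonicity. Given $\phi_j\to\phi$ in $L^1_{loc}$, I would first replace $(\phi_j)$ by $\phi_j^*:=(\sup_{k\geq j}\phi_k)^*\in\mathcal{SH}_m(\Omega)$, which is an $m$-sh majorant decreasing to $\phi$ by a Hartogs-type argument; by the monotonicity from (1), $E_m(\phi_j^*)\leq E_m(\phi_j)$. For each fixed $N$, $\max(\phi_j^*,-N)\searrow\max(\phi,-N)$ pointwise, and since these are all uniformly bounded $m$-subharmonic functions, Bedford-Taylor-B\l ocki continuity of the Hessian operator under bounded monotone convergence gives $E_m(\max(\phi_j^*,-N))\to E_m(\max(\phi,-N))$. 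Sending $j\to\infty$ then $N\to\infty$ and combining with $E_m(\max(\phi,-N))\nearrow E_m(\phi)$ yields $E_m(\phi)\leq\liminf_j E_m(\phi_j)$. A technical subtlety is ensuring that the truncations $\max(\phi_j^*,-N)$ retain the appropriate vanishing boundary behavior needed for the $\mathcal{E}^1_m$ integration-by-parts; this is where the $m$-hyperconvexity of $\Omega$ plays a role, providing a continuous exhaustion against which to control boundary contributions.

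For (4), closedness of $\mathcal{E}^1_m(\Omega,C)$ in $L^1_{loc}$ is immediate from (3). Relative compactness reduces to a uniform $L^1_{loc}$ bound on the family. Fixing $\psi_0\in\mathcal{E}^0_m(\Omega)\cap C(\bar\Omega)$ with $(dd^c\psi_0)^m\wedge\beta^{n-m}$ dominating Lebesgue measure on each compact subset, iterated integration by parts together with Cauchy-Schwarz on positive currents (in the spirit of the H\"older-type energy inequalities standard in the Cegrell-Lu framework) yields
\begin{equation*}
\int_\Omega(-\phi)\,(dd^c\psi_0)^m\wedge\beta^{n-m}\leq c\,E_m(\phi)^{1/(m+1)}\,E_m(\psi_0)^{m/(m+1)},
\end{equation*}
giving a uniform $L^1_{loc}$ bound on $\mathcal{E}^1_m(\Omega,C)$. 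Classical $L^1_{loc}$-compactness of families of subharmonic functions with uniformly bounded $L^1$ norms then completes the argument.
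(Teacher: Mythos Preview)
The paper does not supply its own proof of this lemma: it is stated with the attribution \cite{Lu15} and used as a black box thereafter. So there is no in-paper argument to compare your proposal against.

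That said, your sketch is essentially the standard route taken in the Cegrell--Lu framework and is sound. A few remarks on points where you should be careful when writing it out in full. In (1)--(2), the integration-by-parts manipulations are valid first for bounded functions in $\mathcal{E}^0_m(\Omega)$ because of the vanishing boundary values; the passage to $\mathcal{E}^1_m(\Omega)$ via truncations $\max(\phi_t,-k)$ requires not only convergence of the Hessian measures but also control of the mixed energy integrals, which is where the Cegrell-type inequalities (H\"older-type bounds on $\int(-u_0)(dd^c u_1)\wedge\cdots\wedge(dd^c u_m)\wedge\beta^{n-m}$ by the individual energies) are invoked. In (3), your majorant $\phi_j^*:=(\sup_{k\geq j}\phi_k)^*$ is $m$-subharmonic and $\geq\phi_j$, but you must verify it lies in $\mathcal{E}^1_m(\Omega)$ (and not merely in $\mathcal{SH}_m\cap L^\infty_{loc}$) before invoking $E_m(\phi_j^*)\leq E_m(\phi_j)$; this is fine since $\phi_j\leq\phi_j^*\leq 0$ and $\mathcal{E}^1_m$ is stable under taking $m$-sh majorants bounded below by an element of the class. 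The boundary-behavior subtlety you flag is real but is absorbed exactly by this stability. In (4), the mixed energy inequality you quote is the standard one proved in \cite{Lu15} (extending Cegrell's for $m=n$); with it in hand, relative $L^1_{loc}$-compactness follows from the uniform sup bound $\sup_\Omega\phi=0$ together with the $L^1$ bound on compacta.
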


\subsection{Stability theorems}
To investigate the continuity properties of the functional $I_{m,\mu,\Omega}$ we will need to consider a more special class of Borel measures introduced in \cite{K98} and considered in \cite{CZ23}. 

The following definition was introduced in  \cite{CZ23} following a classical terminology in Potential Theory (see \cite{Po16}). 
\begin{definition} \label{def:Diffuse}
1) A positive Borel measure on $\Omega$ is said to be diffuse with respect to the capacity $c_m = c_{m,\Omega}$
 if there exists a continuous increasing function $\Gamma : \R^+ \to \R^+$ such that $\Gamma (0) = 0$ and for any compact     
 subset $K \subset \Omega$ we have
 \begin{equation} \label{eq:diffuse}
 \mu (K) \leq  \Gamma(c_m(K)). 
 \end{equation}
 In this case we will say that $\mu$ is $\Gamma$-diffuse with respect to $c_m$.
 
 \smallskip
 
 2) A positive Borel measure on $\Omega$ is said to be strongly $\Gamma$-diffuse with respect to the capacity $c_m $ if it's $\Gamma$-diffuse with  $ \Gamma(t) = t \gamma (t)$, where $\gamma$  is a non decreasing continuous function satisfying  the following strong Dini type condition
 \begin{equation}\label{eq:Dini}
\int_{0^+}^1 \gamma(t)^{\frac{1}{m} } \dfrac{d t}{t} =  \int_{0^+}^1  \dfrac{\Gamma(t)^\frac{1}{m}}{t^\frac{1}{m}}  \dfrac{ d t }{t}   < +\infty.
 \end{equation}
\end{definition}
 The notion of domination by capacity was first considered in \cite{K98} when $m=n$.
 
 We define $\mathcal {M}_m(\Omega,\Gamma)$ to be the set of all positive Borel measures $\mu$  on $\Omega$ with finite mass  which are strongly $\Gamma$-diffuse.

Let us give some examples (see \cite{CZ23}).

\begin{example} 1. Let $0 \leq g \in L^p(\Omega)$ with $p>n\slash m$ and  $\mu := g dV_{2 n}$. Then by a result of Dinew-Ko\l odziej \cite{DK14}, $\mu \in \mathcal M_m(\Omega,\Gamma)$ for some function $\Gamma$ defined by $\Gamma (t) = A t^{\tau}$ with $\tau > 1$, which obviously satisfies the strong Dini condition \eqref{eq:Dini} (see also \cite[Example 3.4]{CZ23}).

2. Let $\mu$ be a Borel measure such that there exists $\varphi \in \mathcal SH_m(\Omega) \cap C^\alpha(\bar \Omega)$, $0 < \alpha \leq1$ with $\varphi = 0$ in $\partial\Omega$ such that $\mu \leq (dd^c \varphi)^m \wedge \beta^{n-m}$ in the sense of measures on $\Omega$. Then $\mu \in \mathcal M_m (\Omega,\Gamma)$ for some function $\Gamma$ by $\Gamma (t) = A t^{\tau}$ with $\tau > 1$, which satisfies the Dini condition \eqref{eq:Dini} (see \cite[Corollary 4.1]{CZ23}).
\end{example}

\smallskip

We will need the following weak stability theorem (\cite[Theorem 3.11]{CZ23}).
\begin{theorem} \label{thm:Stability} Let $\mu \in \mathcal{M}_m(\Omega,\Gamma)$ with $\Gamma$ satisfying the strong Dini condition \eqref{eq:Dini}. 
Let   $u, v\in \mathcal{SH}_m (\Omega) \cap L^{\infty} (\Omega)$ be  such that $ \liminf_{z\in\partial\Omega}(u-v)(z)\geq 0 $ and
$$
 (dd^c u)^m \wedge \beta^{n-m} \leq \mu,
$$
 in the sense of currents on $\Omega$. 
 
 Then
 $$
 \sup_{\Omega} (v-u)_+ \leq B h  (\Vert v - u)_+\Vert_{m,\mu}^m)
 $$
 where $\Vert (v - u)_+\Vert_{m,\mu}^m := \int_\Omega  (v - u)_+^m d \mu$, $B = B(m,\gamma,\mu(\Omega))> 0$ is a uniform constant   and $h$ is a continuous function on $\R^+$ such that $h(0)=0$ depending only on $\mu$ and $\Gamma$.
\end{theorem}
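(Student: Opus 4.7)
The plan is to follow the Ko\l odziej--Dinew stability paradigm as developed in \cite{CZ23} for the $m$-Hessian setting. Writing $w:=(v-u)_+$ and, for $s>0$, $U(s):=\{u+s<v\}$ (so that $\{w>s\}=U(s)$), the goal becomes showing that $U(s)=\emptyset$ once $s$ exceeds a quantity controlled by $\Vert w\Vert_{m,\mu}$. The engine will be a capacity--mass inequality combined with the strong diffuseness of $\mu$, yielding a self-improving recursion for $f(s):=c_m(U(s))^{1/m}$ that can be iterated thanks to the strong Dini condition \eqref{eq:Dini}.

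The key ingredient is the capacity--mass inequality: for $0<s<t$,
\[
(t-s)^m\,c_m(U(t)) \leq \int_{U(s)}(dd^c u)^m\wedge\beta^{n-m}\leq \mu(U(s)).
\]
To establish the first inequality, I would fix $\phi\in\mathcal{SH}_m(\Omega)$ with $-1\leq\phi\leq 0$ and set $w_\phi:=v+(t-s)\phi$. Since $(t-s)(1+\phi)\geq 0$ on $U(t)$, $U(t)\subset\{u+s<w_\phi\}$; and since $(t-s)\phi\leq 0$, $\{u+s<w_\phi\}\subset U(s)$. The boundary hypothesis on $u-v$, together with $\phi\leq 0$ pointwise, gives $\liminf_{z\to\partial\Omega}(u+s-w_\phi)(z)\geq s>0$. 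B\l ocki's comparison principle for bounded $m$-subharmonic functions then yields
\[
\int_{\{u+s<w_\phi\}}(dd^c w_\phi)^m\wedge\beta^{n-m}\leq \int_{\{u+s<w_\phi\}}(dd^c u)^m\wedge\beta^{n-m},
\]
while the multinomial expansion of $(dd^c v+(t-s)dd^c\phi)^m$ gives $(dd^c w_\phi)^m\wedge\beta^{n-m}\geq (t-s)^m(dd^c\phi)^m\wedge\beta^{n-m}$ as positive measures. Restricting to $U(t)$ and taking the supremum over admissible $\phi$ in \eqref{eq:Cap} produces the capacity bound; the second inequality is the hypothesis $(dd^c u)^m\wedge\beta^{n-m}\leq\mu$.

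Strong $\Gamma$-diffuseness of $\mu$ extends to open sets by inner regularity, giving $\mu(U(s))\leq c_m(U(s))\,\gamma(c_m(U(s)))$; combined with the above this produces the functional recursion
\[
(t-s)\,f(t)\leq f(s)\,\gamma(f(s)^m)^{1/m},\qquad 0<s<t.
\]
For the initial bound, Chebyshev gives $\mu(U(s_0))\leq s_0^{-m}\Vert w\Vert_{m,\mu}^m$, and one further use of the capacity--mass inequality (with $t=s_0+\tau_0$, $s=s_0$) yields $f(s_0+\tau_0)\leq (s_0\tau_0)^{-1}\Vert w\Vert_{m,\mu}$. Choosing $s_0=\tau_0:=\Vert w\Vert_{m,\mu}^{1/3}$, say, produces an initial value $f_0:=f(s_0+\tau_0)\leq \Vert w\Vert_{m,\mu}^{1/3}$ tending to $0$ with $\Vert w\Vert_{m,\mu}$.

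The iteration then starts at $a_0:=s_0+\tau_0$ and recursively sets $\tau_k:=2\gamma(f_k^m)^{1/m}$, $a_{k+1}:=a_k+\tau_k$, $f_k:=f(a_k)$. The recursion forces $f_{k+1}\leq f_k/2$, whence $f_k\leq f_0\,2^{-k}$ and
\[
\sum_{k\geq 0}\tau_k = 2\sum_{k\geq 0}\gamma(f_0^m\,2^{-mk})^{1/m},
\]
which by the integral test is comparable to $\int_0^{f_0^m}\gamma(t)^{1/m}\,dt/t$; this is finite by the strong Dini condition \eqref{eq:Dini} and is a continuous function of $f_0$ vanishing at $0$. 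Thus $a_k\nearrow a_\infty<\infty$ while $f_k\to 0$, so $c_m(U(a_\infty))=0$; since any nonempty open subset of an $m$-hyperconvex domain has positive $m$-Hessian capacity, $U(a_\infty)=\emptyset$, i.e.\ $\sup_\Omega w\leq a_\infty$. The hard part will be precisely this last bookkeeping: one must carefully package $a_\infty$ as a clean continuous function $h(\Vert w\Vert_{m,\mu}^m)$ with $h(0)=0$ and a uniform constant $B=B(m,\gamma,\mu(\Omega))$, absorbing the various powers of $\Vert w\Vert_{m,\mu}$ produced by the choices of $s_0,\tau_0$ into $h$. The comparison principle step is by now standard after \cite{Bl05}, though vigilance is required with the boundary behaviour of $w_\phi$ since $\phi$ is only known to satisfy $-1\leq\phi\leq 0$ globally.
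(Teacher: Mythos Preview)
The paper does not prove this theorem at all: it is quoted verbatim as \cite[Theorem 3.11]{CZ23} and used as a black box. Your sketch is precisely the Ko\l odziej--type capacity iteration that \cite{CZ23} carries out in the $m$-Hessian setting (comparison principle $\Rightarrow$ $(t-s)^m c_m(U(t))\leq\mu(U(s))$, then diffuseness $\Rightarrow$ self-improving recursion for $f(s)=c_m(U(s))^{1/m}$, then geometric decay and summation via the Dini integral), and it is essentially correct; the only loose end you flag yourself---packaging $a_\infty$ as $B\,h(\Vert w\Vert_{m,\mu}^m)$---is routine once one notes that $\gamma$ is continuous on all of $\R^+$, so $r\mapsto\int_0^{r}\gamma(t)^{1/m}\,dt/t$ is a well-defined continuous function vanishing at $0$ even when $r>1$.
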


We will also need the following result which is a consequence of a combination of Theorem 1 and Theorem 2.2 in  \cite{CZ23}.

\begin{theorem}\label{thm:Continuity}
	Let $\mu \in \mathcal{M}_m(\Omega,\Gamma)$ with $\Gamma$ satisfying the strong Dini condition \eqref{eq:Dini}. Then for any $A >0$, there exists $C_m= C(m, \Omega,A) >0$ such that for any $u, v \in \mathcal E_m^1( \Omega, A)$
	
	\begin{equation}\label{eq:CZ}
		\int_{\Omega}\vert u-v \vert^{m+1} d \mu \leq C_m \tilde h \Big( \int_{\Omega}\vert u-v \vert^{m+1} d V_{2n}\Big),
	\end{equation}
	where $\tilde h=\tilde h_\Gamma$ is continuous in $\R^+$ with $h(0)=0.$
\end{theorem}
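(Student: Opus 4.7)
The plan is a layer-cake decomposition combined with the strong diffuseness of $\mu$ and a capacity estimate for sublevel sets of differences of $\mathcal{E}_m^1(\Omega, A)$-functions. Writing $E_t := \{|u-v| > t\}$,
$$\int_\Omega |u-v|^{m+1}\, d\mu = (m+1)\int_0^\infty t^m \mu(E_t)\, dt,$$
and the hypothesis $\mu \in \mathcal{M}_m(\Omega,\Gamma)$ yields $\mu(E_t) \leq \Gamma(c_m(E_t)) = c_m(E_t)\, \gamma(c_m(E_t))$. It therefore suffices to estimate $c_m(E_t)$ uniformly in $u, v \in \mathcal{E}_m^1(\Omega, A)$ in terms of $t$ and $s := \|u-v\|_{L^{m+1}(dV_{2n})}$.

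The first key input is Theorem 1 of \cite{CZ23}, which provides a quantitative capacity bound of the form
$$c_m\bigl(\{|u-v| > t\}\bigr) \leq \Psi_A\bigl(t, s\bigr),$$
where $\Psi_A$ is continuous, vanishing as its second argument tends to $0$, and singular in $t$ as $t \to 0$. The mechanism is to compare $|u-v|$ on $E_t$ with a suitable $m$-subharmonic envelope whose $m$-energy is controlled by the hypothesis $E_m(u), E_m(v) \leq A$, and then to apply the standard energy--capacity--volume comparisons for the class $\mathcal{E}_m^1(\Omega, A)$.

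I would then combine the two estimates by splitting the $t$-integral at a cut-off $t_0 = t_0(s)$. On the tail $t \geq t_0$, the standard energy estimate $c_m(E_t) \leq C(A)/t$ together with the diffuseness inequality yields a bound of the form $\int_{t_0}^\infty t^m \Gamma(C(A)/t)\, dt$, which is controlled by the Dini integrability of $\gamma^{1/m}$ after a change of variable. On the head $t \leq t_0$, the capacity bound above combined with $\mu(E_t) \leq \Gamma(c_m(E_t))$ produces an integral of the form $\int_0^{t_0} t^m \Gamma(\Psi_A(t,s))\, dt$; this is exactly the integral processed in Theorem 2.2 of \cite{CZ23}. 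Optimizing $t_0 = t_0(s)$ in terms of the integrated Dini modulus yields the continuous function $\tilde h$ with $\tilde h(0) = 0$.

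The main obstacle --- and the precise point where the strong Dini condition \eqref{eq:Dini} is indispensable --- is the head-of-the-integral contribution as $s \to 0^+$. Because $\Psi_A(t,s)$ is singular in $t$, a naive estimate diverges near $t = 0$; the Dini integrability of $\gamma^{1/m}$ is exactly what compensates for this singularity, and the correct power-type scaling of $t_0(s)$ must be read off from the Dini modulus to ensure $\tilde h(s) \to 0$ as $s \to 0^+$, uniformly in the energy bound $A$.
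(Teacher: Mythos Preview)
The paper does not actually supply a proof of this statement: it is simply recorded as ``a consequence of a combination of Theorem~1 and Theorem~2.2 in \cite{CZ23}'' and left at that. So there is no argument in the paper to compare your sketch against; the content is entirely imported from \cite{CZ23}.

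That said, your outline has concrete gaps that would have to be repaired before it could stand on its own. First, your invocation of Theorem~1 of \cite{CZ23} as ``a quantitative capacity bound'' $c_m(\{|u-v|>t\}) \leq \Psi_A(t,s)$ is a misattribution: as the paper itself uses it (see the proof of Lemma~\ref{lem:uniforcv}), Theorem~1 of \cite{CZ23} is an \emph{existence} result for continuous solutions of $(dd^c\phi)^m\wedge\beta^{n-m}=\mathbf{1}_B\,\mu$, not a capacity estimate for sublevel sets of differences. Second, your ``standard energy estimate'' $c_m(E_t)\leq C(A)/t$ has the wrong exponent; from \eqref{eq:CapacityEnergy} and the inclusion $\{|u-v|>t\}\subset\{u<-t\}\cup\{v<-t\}$ one gets $c_m(E_t)\leq C(A)/t^{m+1}$, and this exponent is exactly what makes the change of variables in the tail integral match the Dini condition on $\gamma$. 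Third, and most importantly, the genuinely hard step---producing a bound on $c_m(E_t)$ (or on $\mu(E_t)$) that actually involves the Lebesgue quantity $s=\|u-v\|_{L^{m+1}(dV_{2n})}$ and tends to~$0$ with~$s$---is asserted but not argued. The mechanism you describe (``compare $|u-v|$ on $E_t$ with a suitable $m$-subharmonic envelope'') is vague, and since $u-v$ is not $m$-subharmonic this step needs real work; this is presumably where the cited Theorem~2.2 of \cite{CZ23} enters, but you have not said what that theorem provides or how it closes the argument.
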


%\begin{corollary} Let  $(u_j)$ be a sequence in $\mathcal{SH}_m(\Omega)$ which is uniformly bounded in $\Omega$ and converges to $u$  in $L^1_{loc}(\Omega; dV_{2n})$. Then 
%\end{corollary}
     \section{Non linear Sobolev-Poincaré type inequalities}
There are several versions of this type of inequalities (see \cite{AC20} and \cite{WZ22}).
Here we will give different type of inequalities more suitable for our approach.

  \subsection{Integrability of finite energy potentials}
  To define the functional $I_{m,\mu,\Omega}$ we need to consider measures that staisfy the integrability condition \eqref{eq:Integrability}.
  We give sufficient conditions on a  Borel measure $\mu$ to insure  integrability properties for  functions in the class $\mathcal E^1_m(\Omega)$.

  We will need the following result.
 \begin{lemma}\label{lem:Sob}
	Let $\Omega \Subset \C^n$ be a bounded $m$-hyperconvex domain and $\mu$ be a positive Borel measure on $\Omega$. Assume that there exists a function $v \in \mathcal{SH}_m(\Omega) \cap L^\infty(\Omega)$ satisfying 
	$$
	\mu \leq (dd^c v)^m \wedge \beta ^{n-m} ~\text{on}~ \Omega ~\text{ and }~ v_{|\partial \Omega}=0.
	$$
	Then there exists a constant $A>0$ such that for any $\phi \in \mathcal{E}^1_m(\Omega)$, we have 
	\begin{equation}\label{eq:Sob}
		\int_{\Omega}(-\phi)^{m+1} d\mu \leq A E_m(\phi).
	\end{equation}
	In particular $\mathcal{E}^1_m(\Omega) \subset L^{m+1} (\Omega,\mu)$.
  \end{lemma}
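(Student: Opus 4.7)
The plan is to reduce the claim to the class $\mathcal{E}^0_m(\Omega)$ and there transfer, via $m$ iterated integrations by parts, the $m$ factors of $dd^c v$ onto $\phi$. Since the hypothesis $\mu\le(dd^c v)^m\wedge\beta^{n-m}$ yields $\int_\Omega(-\phi)^{m+1}d\mu\le J(\phi):=\int_\Omega(-\phi)^{m+1}(dd^c v)^m\wedge\beta^{n-m}$, it suffices to bound $J(\phi)$ by a constant multiple of $E_m(\phi)$, with the constant depending only on $m$ and $\|v\|_\infty$. Observe that by the maximum principle for $m$-subharmonic functions $v\le 0$ on $\Omega$. Once the bound is established for $\phi\in\mathcal E^0_m(\Omega)$, the extension to $\mathcal E^1_m(\Omega)$ will be immediate: any such $\phi$ is the decreasing limit of some $(\phi_j)\subset\mathcal E^0_m(\Omega)$ with $\sup_jE_m(\phi_j)<+\infty$, so the inequality will pass to the limit by monotone convergence on the left and by the semi-continuity of $E_m$ on the right (Lemma~\ref{lem:FirstVariation}).

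Assume henceforth $\phi,v\in\mathcal E^0_m(\Omega)$, so both are bounded, vanish on $\partial\Omega$, and the Cegrell--B\l ocki--Lu integration by parts formula is legitimate for every positive current that will appear (see \cite{Bl05,Lu15}). The computation will rest on the pointwise identity, valid for each integer $q\ge 2$,
\[
 dd^c\bigl((-\phi)^q\bigr)\;=\;-q\,(-\phi)^{q-1}\,dd^c\phi\;+\;q(q-1)\,(-\phi)^{q-2}\,d\phi\wedge d^c\phi,
\]
in which the second summand is an $m$-positive $(1,1)$-current. Integrating $J(\phi)$ by parts once and inserting this identity with $q=m+1$ produces two terms. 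The one carrying $d\phi\wedge d^c\phi$ is integrated against $v\le 0$ and the positive current $(dd^c v)^{m-1}\wedge\beta^{n-m}$, so it is non-positive and may be discarded. Bounding $|v|\le\|v\|_\infty$ on the remaining term yields
\[
 J(\phi)\;\le\;(m+1)\,\|v\|_\infty\int_\Omega (-\phi)^m\,dd^c\phi\wedge(dd^c v)^{m-1}\wedge\beta^{n-m}.
\]

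The proof will then close by iterating exactly this step $m-1$ more times. At the $k$th iteration ($1\le k\le m$), one more integration by parts together with the identity applied with $q=m+2-k$ converts one further factor of $dd^c v$ into $dd^c\phi$, lowers the exponent of $(-\phi)$ by one, discards the non-positive $d\phi\wedge d^c\phi$ contribution, and costs a multiplicative factor at most $(m+2-k)\|v\|_\infty$. Telescoping the resulting bounds,
\[
 J(\phi)\;\le\;(m+1)!\,\|v\|_\infty^m\int_\Omega(-\phi)\,(dd^c\phi)^m\wedge\beta^{n-m}\;=\;(m+1)(m+1)!\,\|v\|_\infty^m\,E_m(\phi),
\]
which is the desired inequality \eqref{eq:Sob} with $A:=(m+1)(m+1)!\,\|v\|_\infty^m$. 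The main technical point will be justifying the iterated integration by parts on $\mathcal E^0_m(\Omega)$ and verifying at each step that the $d\phi\wedge d^c\phi$ term carries the sign one hopes for; both rely on the vanishing boundary values of $\mathcal E^0_m$ potentials and on the positivity of mixed Hessian currents, features already established in the theory developed in \cite{Bl05,Lu15}, so this step should not present a substantial new difficulty.
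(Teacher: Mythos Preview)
Your proof is correct and follows essentially the same route as the paper. The paper invokes B\l ocki's inequality \eqref{eq:Blocki} as a black box (citing \cite{Bl93}) and merely remarks that it is proved ``using integration by parts $m$ times''; you carry out precisely that iterated integration by parts explicitly, arriving at the identical constant $A=(m+1)(m+1)!\,\|v\|_\infty^m$.
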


  Observe that for $m=1$ , we have 
  $$E_1(\phi) = \frac{1}{2}\int_\Omega (-\phi) dd^c \phi \wedge \beta^{n-1} =  \frac{1}{2}\int_\Omega d \phi \wedge d^c \phi \wedge \beta^{n-1}.$$
   Hence the inequality \eqref{eq:Sob} in this case is the classical Poincaré inequality for functions in $\mathcal E^1 (\Omega) \subset W_0^{1,2} (\Omega)$.

\begin{proof} 
To prove the inequality \eqref{eq:Sob}, we will need the following estimate due to Z. B\l{}ocki for the complex Monge-Ampère operator.

For any $u, w \in \mathcal E^0_m (\Omega)$ we have
\begin{equation} \label{eq:Blocki}
\int_\Omega (-u)^{m+1} (dd^c w)^m\ \wedge \beta^{n-m} \leq  {(m+1) !}  \, \Vert w\Vert^m_{L^\infty (\Omega)} \int_\Omega (-u) (dd^c u)^m \wedge  \beta^{n-m}.
\end{equation} 
This can be proved using integration by parts $m$ times  (see \cite{Bl93}).

To prove the estimate \eqref{eq:Sob}, it is enough to assume that $\phi \in \mathcal E^0_m (\Omega)$.
Then  since we have
$$
\int_\Omega (-\phi)^{m+1}  d\mu  \leq \int_\Omega (-\phi)^{m+1}  (dd^c v)^m \wedge  \beta^{n-m},
$$
it follows from \eqref{eq:Blocki} that
$$
\int_\Omega (-\phi)^{m+1}  d\mu  \leq  {(m+1) !}  \,  \Vert v\Vert^m_{L^\infty (\Omega)} \int_\Omega (-\phi) (dd^c \phi)^m\wedge  \beta^{n-m},
$$
which proves the required estimate with $A :=  (m+1) {(m+1)!}  \, \Vert v\Vert^m_{L^\infty (\Omega)}$.	
\end{proof}

We extend the previous result to a special class of diffuse measures.  
 \begin{proposition} \label{prop:DCI} Let  $\mu$ be a finite $\Gamma$-diffuse Borel measure with respect to  $c_m = c_{m,\Omega}$  i.e. for any Borel set $K \subset \Omega$,
 \begin{equation} \label{eq:DC}
 \mu (K) \leq   \Gamma(\mathrm{c}_m(K)).
 \end{equation}
  Assume that  there exists $r > 0$ such that $\Gamma$ satisfies the following integrability condition
  \begin{equation} \label{eq:ellGamma}
  \ell_\Gamma  := \int_0^1 \frac{\Gamma (t)}{t^{1 + r\slash (m+1)}}  d t < +\infty. 
  \end{equation}
  Then  there exists a constant $C = C (m,r,\ell_\Gamma,\mu(\Omega)) > 0$ such that for any $\phi \in \mathcal E^1_m (\Omega) $,
 $$
  \int_\Omega (-\phi)^r d \mu  \leq C  E_m(\phi)^{r \slash (m+1)}.
 $$
 In particular $\mathcal E^1_m (\Omega) \subset L^r (\Omega,\mu)$. 
 \end{proposition}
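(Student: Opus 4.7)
The plan is to apply the classical distribution-function (layer-cake) method, reducing the inequality to two ingredients: a capacity-energy estimate and the diffuseness hypothesis on $\mu$.

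\smallskip

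\emph{Step 1 (Capacity-energy estimate).} I first establish that there exists $C_1 = C_1(m) > 0$ such that, for every $\phi \in \mathcal{E}^1_m(\Omega)$ with $\phi \leq 0$ and every $t > 0$,
\begin{equation*}
c_m(\{\phi < -t\}) \leq \frac{C_1\, E_m(\phi)}{t^{m+1}}.
\end{equation*}
Assume first $\phi \in \mathcal{E}^0_m(\Omega)$, fix a compact $K \subset \{\phi < -t\}$, and apply B\l{}ocki's inequality \eqref{eq:Blocki} with $u = \phi$ and $w = h_K$. Since $-1 \leq h_K \leq 0$ by Lemma~\ref{lem:extremalproperties}$(1)$, this yields
\begin{equation*}
\int_\Omega (-\phi)^{m+1}\, (dd^c h_K)^m \wedge \beta^{n-m} \leq (m+1)(m+1)!\, E_m(\phi).
\end{equation*}
By properties $(3)$ and $(5)$ of Lemma~\ref{lem:extremalproperties}, the measure $(dd^c h_K)^m \wedge \beta^{n-m}$ is supported on $K$ with total mass $c_m(K)$; since $-\phi \geq t$ on $K$, the left-hand side is at least $t^{m+1}\, c_m(K)$. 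Inner regularity of $c_m$ and a monotone approximation of $\phi \in \mathcal{E}^1_m$ by a decreasing sequence $\phi_j \in \mathcal{E}^0_m$ (with $E_m(\phi_j) \to E_m(\phi)$) then extend the estimate to the whole class $\mathcal{E}^1_m(\Omega)$.

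\smallskip

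\emph{Step 2 (Layer-cake and splitting).} By Cavalieri's formula,
\begin{equation*}
\int_\Omega (-\phi)^r\, d\mu = r \int_0^\infty t^{r-1}\, \mu(\{\phi < -t\})\, dt.
\end{equation*}
Set $T := (C_1 E_m(\phi))^{1/(m+1)}$ and split the integral at $T$. For $0 < t \leq T$, use the trivial bound $\mu(\{\phi < -t\}) \leq \mu(\Omega)$, contributing $\mu(\Omega)\, T^r = \mu(\Omega)\,(C_1 E_m(\phi))^{r/(m+1)}$. For $t > T$, combine the diffuseness hypothesis \eqref{eq:DC}, the monotonicity of $\Gamma$, and Step 1 to obtain
\begin{equation*}
\mu(\{\phi<-t\}) \leq \Gamma\!\left(\frac{C_1 E_m(\phi)}{t^{m+1}}\right).
\end{equation*}

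\smallskip

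\emph{Step 3 (Change of variables).} The substitution $s = C_1 E_m(\phi)/t^{m+1}$ converts the tail into
\begin{equation*}
r \int_T^\infty t^{r-1}\, \Gamma\!\left(\frac{C_1 E_m(\phi)}{t^{m+1}}\right) dt = \frac{r\,(C_1 E_m(\phi))^{r/(m+1)}}{m+1} \int_0^1 \frac{\Gamma(s)}{s^{1+r/(m+1)}}\, ds,
\end{equation*}
and the last integral equals $\ell_\Gamma < \infty$ by hypothesis \eqref{eq:ellGamma}. Adding the two contributions yields the claimed bound with $C = C_1^{r/(m+1)}\bigl(\mu(\Omega) + r\ell_\Gamma/(m+1)\bigr)$, which depends only on $m$, $r$, $\ell_\Gamma$, and $\mu(\Omega)$.

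\smallskip

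The main obstacle is Step 1: obtaining the capacity-energy inequality in the class $\mathcal{E}^1_m$. For bounded potentials in $\mathcal{E}^0_m$ it reduces cleanly to B\l{}ocki's integration-by-parts estimate \eqref{eq:Blocki}; the extension to the unbounded class requires knowing that $(dd^c h_K)^m \wedge \beta^{n-m}$ charges no $m$-polar set and that $E_m$ is monotone along decreasing sequences (Lemma~\ref{lem:FirstVariation}$(1)$), so that taking limits preserves the inequality. Once Step 1 is in hand, Steps 2--3 are a routine computation.
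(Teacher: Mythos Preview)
Your proof is correct and follows essentially the same route as the paper: layer-cake formula, capacity--energy estimate $c_m(\{\phi<-t\})\lesssim E_m(\phi)/t^{m+1}$, the diffuseness hypothesis, and the change of variable $s=C_1E_m(\phi)/t^{m+1}$. The only cosmetic differences are that the paper cites the capacity--energy inequality from \cite[Lemma~7.1]{Lu15} rather than deriving it from \eqref{eq:Blocki}, and that the paper splits the integral at $t=1$, treats the normalized case $E_m(\phi)\le 1/B_m$ first, and then recovers the general case by the homogeneity $E_m(c\phi)=c^{m+1}E_m(\phi)$; your adaptive cut at $T=(C_1E_m(\phi))^{1/(m+1)}$ accomplishes the same thing in one stroke.
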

 \begin{proof}
 Again the idea of the proof is classical (see \cite{BJZ05}). Let $\phi \in \mathcal E^1_m (\Omega)$.  Then by Cavalieri-Fubini principle we have
 $$
 \int_\Omega (-\phi)^r d \mu \leq  \mu (\Omega) + r \int_1^{+\infty} t^{r-1} \mu (\{\phi < - t\}) d t.
 $$
 From the domination property we get
 $$
 \int_\Omega (-\phi)^r d \mu \leq  \mu (\Omega) + r \int_1^{+ \infty} s^{r-1} \Gamma (\mathrm{c}_m(\{\phi < - s\})) d s.
 $$
 By \cite[Lemma 7.1]{Lu15}, it follows that  there exists a uniform constant $B_{m} >0$ such that for any $s >0$,
 \begin{equation} \label{eq:CapacityEnergy}
 \mathrm{c}_m(\{\phi < - s\})  \leq \frac{B_{m}}{s^{m +1}} E_m(\phi),
 \end{equation}
 Assume first that $E_m(\phi) \leq 1 \slash B_m $.  Then 
 
 $$
 \int_\Omega (-\phi)^r d \mu \leq  \mu (\Omega)  + r \, \int_1^{+ \infty} s^{r} \Gamma \left(s^{- m-1}\right) \frac{d s}{s}.
 $$
 Set $t =  s^{-m-1}$. Then we get
$$
\int_\Omega (-\phi)^r d \mu \leq  \mu (\Omega) + r(m+1)  \int_0^{ 1} \frac{\Gamma (t)}{t^{1 + r\slash (m+1)}} d t. 
 $$ 
  Using \eqref{eq:ellGamma}, it follows that
 \begin{eqnarray*}
 \int_\Omega (-\phi)^r d \mu  & \leq & A,
 \end{eqnarray*}
 where $A := \mu (\Omega) +   r(m+1) \, \ell_\Gamma$. 
 
 Now if $E_m(\phi) \geq  1 \slash B_m$, we consider the function defined by 
 $$
 \tilde \phi := B_m ^{-{1\slash (m+1)}} E_m(\phi)^{-{1\slash (m+1)}}\phi.
 $$ 
 Then  $\tilde \phi \in \mathcal E_m^1(\Omega)$  and $E_m(\tilde \phi) \leq 1 \slash B_m$. 
  Applying the previous inequality to $\tilde \phi$, we get by homogeneity $\int_\Omega (-\phi)^r d \mu  \leq C E_m (\phi)^{r\slash m}$, where $C := A B_m^{r\slash (m+1)}$ which is a uniform constant. This proves the required  inequality.
 \end{proof}
 As a consequence we get the following result.
 \begin{corollary} \label{prop:DC} Let $\mu$ be a finite Borel measure which satisfies the following condition :  there exist constants $A > 0$ and $\tau > 0$ such that for any Borel set $K \subset \Omega$,
 \begin{equation} \label{eq:DC}
 \mu (K) \leq A  \, \, c_m(K)^{\tau}.
 \end{equation}
 Then  for any $0 <  r < \tau(m + 1)$, there exists a constant $B = B(m,\tau, r, A,\Omega) > 0$ such that
 $$
  \int_\Omega (-\phi)^r d \mu  \leq B E_m(\phi)^{r \slash (m+1)}.
 $$
 In particular $\mathcal E^1_m (\Omega) \subset L^r (\Omega,\mu)$. 
 \end{corollary}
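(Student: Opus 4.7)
The plan is to reduce this corollary to the immediately preceding Proposition \ref{prop:DCI}. The hypothesis is precisely that $\mu$ is $\Gamma$-diffuse with respect to $c_m$ for the specific choice $\Gamma(t) = A\, t^\tau$, so the only thing to check before applying Proposition \ref{prop:DCI} is the integrability condition \eqref{eq:ellGamma} on $\Gamma$ for the given exponent $r$.

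Concretely, I would substitute $\Gamma(t) = A t^\tau$ into the defining integral
\[
\ell_\Gamma = \int_0^1 \frac{\Gamma(t)}{t^{1 + r/(m+1)}}\, dt = A \int_0^1 t^{\tau - 1 - r/(m+1)}\, dt,
\]
and observe that this integral converges at $0$ if and only if $\tau - r/(m+1) > 0$, i.e.\ exactly when $r < \tau(m+1)$, which is the hypothesis of the corollary. In that range $\ell_\Gamma$ is finite and depends only on $A$, $\tau$, $r$, and $m$.

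Then Proposition \ref{prop:DCI} applies and yields a constant $C = C(m, r, \ell_\Gamma, \mu(\Omega))$, hence a constant $B = B(m, \tau, r, A, \Omega)$ (note that $\mu(\Omega) \leq A\, c_m(\Omega)^\tau$ is itself controlled by $A$ and $\Omega$), such that
\[
\int_\Omega (-\phi)^r\, d\mu \leq B\, E_m(\phi)^{r/(m+1)}
\]
for every $\phi \in \mathcal E^1_m(\Omega)$. The inclusion $\mathcal E^1_m(\Omega) \subset L^r(\Omega,\mu)$ follows at once since $E_m(\phi) < \infty$ by definition of the energy class.

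There is essentially no obstacle here: the corollary is a pure specialization of Proposition \ref{prop:DCI} to power-type domination functions, and the threshold $r < \tau(m+1)$ arises naturally as the convergence condition of the single integral above. The only mild care needed is to keep track of the dependence of the final constant, but since $\ell_\Gamma$ and $\mu(\Omega)$ are both controlled by $A$, $\tau$, $r$, and $\Omega$, this bookkeeping is straightforward.
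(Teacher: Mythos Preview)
Your proposal is correct and matches the paper's approach exactly: the paper presents this corollary as an immediate consequence of Proposition~\ref{prop:DCI} without further proof, and your verification that $\Gamma(t)=A t^\tau$ satisfies the integrability condition~\eqref{eq:ellGamma} precisely when $r<\tau(m+1)$ is the intended (and only) content.
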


 \begin{remark} It is easy to see that the previous sufficient condition for integrability is almost optimal. 
 Indeed, assume that $\mathcal E^1_m (\Omega) \subset L^r (\Omega,\mu)$. Then arguing by contradiction as in \cite{GZ07}, it's easy to deduce 
 that there exists a constant $B > 0$ such that for any $\phi \in  \mathcal E^1_m (\Omega)$, we have
 $$
  \int_\Omega (-\phi)^r d \mu  \leq B E_m(\phi)^{r \slash (m+1)}.
 $$

Let $K \subset \Omega$ be a compact set. Applying this inequality to  the extremal function 
$h_K = h_{K,\Omega} = P_{m,\Omega} (- {\bf 1}_K)$ and taking into account the properties of $h_K$  and the formula \eqref{eq:Capacity} in Lemma \ref{lem:extremalproperties}, we deduce that
$$
\mu(K) \leq \int_\Omega (-h_K)^r d \mu  \leq B \, E_m(h_K)^{r \slash (m+1)} \leq B \, c_m(K)^{r \slash (m+1)}.
$$
 \end{remark}
 
 The following corollary was proved in \cite{AC20} by similar methods.
 \begin{corollary} \label{coro:Integrability} Let $\mu := f \beta^n$ where  $0 \leq f \in L^p (\Omega)$ with $p > n\slash m$.  Set $k (m,n,p) := n (p-1) \slash p (n-m)$. 
 Then for any $1 < r  <  (m+1)  k (m,n,p)$, there exists a constant $C = C(r, m,n, p,\Vert f\Vert_p) >0$ such that
 \begin{equation}\label{eq:poincare2}
 \int_\Omega (- \phi)^{r} d \mu \leq C \,  E_m(\phi)^{r\slash (m+1)}.
 \end{equation}
 In particular   $ \mathcal E^1_m (\Omega) \subset L^{r} (\Omega,\mu)$.
 \end{corollary}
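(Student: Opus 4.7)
The strategy is to recognise Corollary~\ref{coro:Integrability} as a direct consequence of Corollary~\ref{prop:DC}, once I verify that $\mu=f\beta^n$ is dominated by a suitable power of the $m$-Hessian capacity. The crucial intermediate fact is the $m$-Hessian analogue of the Alexander--Taylor volume--capacity inequality, established in \cite{DK14}: there is a constant $C_0=C_0(m,n,\Omega)>0$ such that for every Borel set $K\subset\Omega$,
\[
\int_K\beta^n\leq C_0\,c_m(K)^{n/(n-m)}.
\]
I would verify it by applying Chebyshev's inequality to the relative extremal function $h_K=P_m(-\mathbf{1}_K)$: by Lemma~\ref{lem:extremalproperties}, $-h_K\geq 1$ quasi-everywhere on $K$, hence $\int_K\beta^n\le\int_\Omega(-h_K)^{n-m}\beta^n$, and Chern--Levine--Nirenberg type estimates control the right-hand side by a power of $c_m(K)=\int_\Omega(dd^ch_K)^m\wedge\beta^{n-m}$.

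\textbf{Reduction to Corollary~\ref{prop:DC}.} With this inequality in hand, H\"older's inequality with conjugate exponents $p$ and $p/(p-1)$ gives
\[
\mu(K)=\int_Kf\,\beta^n\leq\|f\|_{L^p(\Omega)}\Bigl(\int_K\beta^n\Bigr)^{(p-1)/p}\leq A\,c_m(K)^{n(p-1)/(p(n-m))}=A\,c_m(K)^{k(m,n,p)},
\]
where $A:=C_0^{(p-1)/p}\|f\|_{L^p(\Omega)}$. A short computation shows that $p>n/m$ is equivalent to $k(m,n,p)>1$, so Corollary~\ref{prop:DC} applies with $\tau=k(m,n,p)$ and yields exactly
\[
\int_\Omega(-\phi)^r\,d\mu\leq C\,E_m(\phi)^{r/(m+1)},\qquad\phi\in\mathcal E^1_m(\Omega),
\]
for any $0<r<(m+1)\,k(m,n,p)$, with $C=C(r,m,n,p,\|f\|_p,\Omega)$.

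\textbf{Main obstacle.} The only non-routine ingredient is the volume--capacity estimate of the first paragraph; the rest is a formal combination of H\"older's inequality with the already-proved Corollary~\ref{prop:DC}. Since this inequality is precisely the content of \cite{DK14} and is also the tool used in \cite{AC20}, the corollary follows without further work. Moreover, the sharpness of the exponent $(m+1)\,k(m,n,p)$ on $r$ is inherited from the sharpness of the exponent $n/(n-m)$ in the volume--capacity bound.
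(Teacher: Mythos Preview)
Your approach is essentially identical to the paper's: both combine the Dinew--Ko\l odziej volume--capacity comparison with H\"older's inequality to verify the domination hypothesis \eqref{eq:DC} and then invoke Corollary~\ref{prop:DC}. The only cosmetic difference is that the paper states the volume--capacity bound as $\int_K\beta^n\leq M\,c_m(K)^\theta$ for every $\theta<n/(n-m)$ (so that $\tau$ is any number strictly below $k(m,n,p)$) rather than with the critical exponent itself; since the conclusion is for $r$ in the \emph{open} interval $\bigl(0,(m+1)k(m,n,p)\bigr)$, this changes nothing.
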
 
 \begin{proof} This follows from the comparison of volume and $m$-capacity due to Dinew-Ko\l odziej (\cite{DK14}). Namely for any $\theta < n\slash (n-m)$, there exists a constant $M = M(\theta,\Omega) > 0$ such that
 \begin{equation}
 \int_K \beta^n \leq M c_m(K)^\theta.
 \end{equation}
 By H\"older inequality, it follows that
 $$
 \int_K f \beta^n \leq M^{1 \slash q} \Vert f \Vert_p (c_m(K))^{\theta \slash q},
 $$
 where $q := p\slash (p-1)$. 
 This means that the domination condition \eqref{eq:DC} is satisfied with any $\tau < k (m,n,p) := \frac{n (p-1)}{ p (n-m)}$.
 The conclusion follows then from the  previous Corollary.
 \end{proof}

If $\mu$ satisfies the assumptions of Lemma \ref{lem:Sob} or Proposition \ref{prop:DCI} with $r \geq m +1$, then we have 
$$
\mathcal{E}_m^1(\Omega) \subset L^{m+1}(\Omega,\mu).
$$ 
In this case the functional 
$$
I_{m,\mu, \Omega}(\phi):=\frac{1}{m+1}\int_{\Omega}(-\phi)^{m+1} d\mu
$$
is well defined on $\mathcal{E}_m^1(\Omega)$. \\

Now we investigate the continuity properties of $I_{m,\mu,\Omega}$. 

\begin{theorem}\label{thm:continuity}
Let $\Omega \Subset \C^n$ be a bounded  $m$-hyperconvex domain and $\mu \in \mathcal{M}_m(\Omega,\Gamma)$ with $\Gamma$ satsifying the strong Dini type condition \eqref{eq:Dini}. 

Then we have the following properties :

 1) $\mathcal E^1_m (\Omega) \subset L^{m+1} (\Omega,\mu)$;
	
 2) for each $C > 0$ and  any  sequence $(u_j)$ in $\mathcal{E}^1_m (\Omega,C)$ converging to $u \in \mathcal{E}^1_m (\Omega,C)$ in $L^1_{loc}(\Omega)$,  the sequence  $(u_j)$ converges to $u$ in $L^{m+1} (\Omega,\mu)$.
	
 In particular  the functional $I_{m,\mu,\Omega}$ is continuous on $\mathcal E^1_m(\Omega,C)$ for the $L^1_{loc}(\Omega)$-topology and 
    
    \begin{equation}  \label{eq:Formula1}
    \lim_{j \to + \infty} \int_\Omega (-u_j)^{m+1}  d \mu = \int_\Omega (-u)^{m+1}  d \mu.
    \end{equation}
    \end{theorem}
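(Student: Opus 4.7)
The plan is to treat the two assertions in order, reducing each to the machinery already developed in the section.

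For part (1), I would apply Proposition~\ref{prop:DCI} with exponent $r = m+1$. The integrability condition then becomes
\[
\ell_\Gamma = \int_0^1 \frac{\Gamma(t)}{t^{2}}\, dt = \int_0^1 \frac{\gamma(t)}{t}\, dt < +\infty.
\]
Since $\gamma$ is continuous and non-decreasing, $\gamma(t)\leq \gamma(1)$ on $[0,1]$, so $\gamma(t) \leq \gamma(1)^{(m-1)/m}\,\gamma(t)^{1/m}$, and the strong Dini hypothesis \eqref{eq:Dini} yields $\ell_\Gamma<+\infty$. Proposition~\ref{prop:DCI} then produces a constant $C>0$ such that
\[
\int_\Omega (-\phi)^{m+1}\, d\mu \leq C\, E_m(\phi), \qquad \phi\in\mathcal{E}_m^1(\Omega),
\]
which is (1), together with a quantitative estimate used below.

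For part (2), my strategy is to combine the stability Theorem~\ref{thm:Continuity} with an equi-integrability argument. Since $u, u_j \in \mathcal{E}_m^1(\Omega,C)$, Theorem~\ref{thm:Continuity} gives
\[
\int_\Omega |u_j-u|^{m+1}\, d\mu \leq C_m\, \tilde h\!\left(\int_\Omega |u_j-u|^{m+1}\, dV_{2n}\right),
\]
so since $\tilde h$ is continuous with $\tilde h(0)=0$ it suffices to show that $\int_\Omega |u_j-u|^{m+1}\, dV_{2n}\to 0$. To upgrade $L^1_{loc}$ convergence to convergence in $L^{m+1}(\Omega,dV_{2n})$, I would invoke Corollary~\ref{coro:Integrability} applied to $g\equiv 1$: the class $\mathcal{E}_m^1(\Omega,C)$ is uniformly bounded in $L^r(\Omega,dV_{2n})$ for any $m+1<r<(m+1)n/(n-m)$ (with no upper restriction when $m=n$).

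With this uniform bound $\|u_j\|_{L^r(\Omega)},\|u\|_{L^r(\Omega)} \leq M$, I would split along a compact exhaustion. Given $\varepsilon>0$, choose $K\Subset\Omega$ with $V_{2n}(\Omega\setminus K)$ small enough that H\"older's inequality gives
\[
\int_{\Omega\setminus K}|u_j-u|^{m+1}\, dV_{2n} \leq V_{2n}(\Omega\setminus K)^{1-(m+1)/r}\, (2M)^{m+1} < \varepsilon/2,
\]
uniformly in $j$; on $K$, interpolation between $L^1(K)$ and $L^r(K)$ gives $\|u_j-u\|_{L^{m+1}(K)}\leq \|u_j-u\|_{L^1(K)}^{\theta}\, (2M)^{1-\theta}$ for a suitable $\theta\in(0,1)$, which tends to $0$ by hypothesis. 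Combining these estimates with Theorem~\ref{thm:Continuity} yields $\int_\Omega|u_j-u|^{m+1}\, d\mu\to 0$, and \eqref{eq:Formula1} follows from the reverse triangle inequality for the $L^{m+1}(\mu)$ norms of $-u_j$ and $-u$.

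The main obstacle I anticipate is precisely the upgrade from $L^1_{loc}$ convergence to global $L^{m+1}(\Omega,dV_{2n})$ convergence: equi-integrability near $\partial\Omega$ must be controlled, and this is where the strict inequality $r>m+1$ in the capacity-based Sobolev-type bound is decisive. Once that estimate is in hand, the remainder is standard interpolation, and Theorem~\ref{thm:Continuity} transports the conclusion from Lebesgue measure back to the diffuse measure $\mu$.
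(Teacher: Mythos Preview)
Your argument is correct, and Part~(1) matches the paper's proof exactly. For Part~(2), however, your route is genuinely different and in fact more economical than the paper's.

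The paper proceeds by truncation: it first treats the uniformly bounded case via dominated convergence and Theorem~\ref{thm:Continuity}, then writes $u_j^{(k)}=\max\{u_j,-k\}$ and controls the tails $\int_{\{u_j\leq -k\}}(-u_j)^{m+1}\,d\mu$ uniformly in $j$ by invoking B\l ocki's inequality~\eqref{eq:Blocki} together with an auxiliary result (Lemma~\ref{lem:uniforcv}) on the uniform smallness of the potentials $\phi_B$ solving $(dd^c\phi_B)^m\wedge\beta^{n-m}=\mathbf{1}_B\,\mu$ as $\mu(B)\to 0$. This yields equi-integrability of $(-u_j)^{m+1}$ directly with respect to $\mu$.

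You instead exploit the higher integrability $\mathcal{E}_m^1(\Omega,C)\subset L^r(\Omega,dV_{2n})$ for some $r>m+1$ (Corollary~\ref{coro:Integrability}), which gives equi-integrability of $|u_j-u|^{m+1}$ with respect to Lebesgue measure via H\"older, and then interpolate on compacta to upgrade $L^1_{loc}$ convergence to $L^{m+1}(\Omega,dV_{2n})$ convergence; a single application of Theorem~\ref{thm:Continuity} then transfers the conclusion to $\mu$. Your approach avoids both the truncation step and Lemma~\ref{lem:uniforcv} entirely. The paper's method, on the other hand, establishes equi-integrability with respect to $\mu$ itself, which is a slightly stronger intermediate statement and motivates the auxiliary lemma; but for the purpose of proving Theorem~\ref{thm:continuity} your shortcut is cleaner.
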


    \begin{proof} 1) We can apply   Proposition \ref{prop:DCI}  with $r = m +1$. Indeed if $\Gamma (t) = t \gamma (t)$,  the condition \eqref{eq:ellGamma} means that $\int_0^1 t^{-1} \gamma (t) dt < + \infty$,   which is verified if $\gamma$ satisfies the Dini type condition \eqref{eq:Dini}. This shows that $\mathcal E^1_m (\Omega) \subset L^{m+1} (\Omega,\mu)$.
    
  2)  Let $(u_j)$ be a sequence in $\mathcal{E}^1_m (\Omega,C)$ converging to $u \in \mathcal{E}^1_m (\Omega,C)$ in the $L^1_{loc}(\Omega)$-topology. 
  
  Assume first that $(u_j)$ is uniformly bounded in $\Omega$. Then since the sequence $(u_j)_{j \in \N}$ converges to $u$ in $L^1_{loc}(\Omega; dV_{2n})$, taking a subsequence if necessary,  we can assume that $u_j \to u$ a.e. in $\Omega$ with respect to the Lebesgue measure in $\Omega$. It follows  from the Lebesgue convergence theorem  that $ u_j \to u $ in $L^{p}(\Omega, d V_{2n})$ for any $p>1.$ Hence $ u_j \to u $ in $L^{m+1}(\Omega, d V_{2n})$ and by Theorem \ref{thm:Continuity}, it follows that $u_j   \to   u $ in $L^{m+1} (\Omega, \mu)$. This implies the formula \eqref{eq:Formula1}.    
	
  We now consider the general case.  For  fixed $k, j  \in \N$,  we define $u^{(k)} := \sup \{u,-k\}$ and 
  $ u_j^{(k)} :=  \sup \{u_j ,  -  k\}$.  We also define for $j, k \in \N$, $h_j := (-u_j)^{m+1}$,  $h_j^{(k)} = (-u_j^{(k)})^{m+1}$ and  
  $h := (-u)^{m+1}$ and $h^{(k)} :=(- u^{(k)})^{m+1} $. These are Borel  functions in  $L^1(\Omega, \mu)$ and  we have the 
  following obvious inequalities :
	\begin{eqnarray} \label{eq:FundEq3}
	\Big\vert\int_\Omega (h_j - h)  d \mu \Big\vert &\leq  & \int_\Omega (h_j^{(k)} - h_j) d \mu + \int_\Omega \vert h_j^{(k)} - h^{(k)}\vert 
	d \mu\\
	& + &    \int_\Omega (h^k -h)  d \mu. \nonumber
	\end{eqnarray}
	
	For fixed $k$, the sequence  $(u_j^{(k)})_{j \in \N}$ is a  uniformly bounded sequence of $m$-subharmonic functions in $\Omega$.  Then applying the first step, we see that for each $k \in \N$, the second term in (\ref{eq:FundEq3}) converges to $0$ as $j \to + \infty$, while the third term converges to $0$ by the monotone convergence theorem when $k \to + \infty$.  It remains to show that the first term converges to $0$ as $k \to + \infty$, uniformly in $j$.
	Indeed,  for $j, k \in \N^*$ we have the following obvious estimates 
	\begin{equation} \label{eq:1}
	\int_\Omega\vert h_j - h_j^{(k)} \vert d \mu \leq 2 \int_{\{h_j \geq k^{m+1}\}} h_j d \mu. 
	\end{equation}
	We claim that  the sequence $k \longmapsto \int_{\{h_j \geq k^{m+1}\}} h_j d \mu$ converges to $0$ uniformly in $j$ as $k \to + \infty$.
	Indeed  for fixed $j, k$, we have
	\begin{equation} \label{eq:2}
	\int_{\{h_j \geq k^{m+1}\}} h_j d \mu =  \int_{\{u_j \leq - k\}} (-u_j)^{m+1} d \mu.
	\end{equation}
	
	On the other hand, fix a Borel subset  $B \subset \Omega$. Since $\mu \in \mathcal{M}_m(\Omega, \Gamma)$, with $\Gamma$ satisfying the strong Dini condition \eqref{eq:Dini},
 it follows from \cite[Theorem 1]{CZ23} that there exists a function $\phi_B \in \mathcal{SH}_m (\Omega) \cap C^0(\bar \Omega)$ such that $\phi_B =0$ in $\partial \Omega$ and $(dd^c \phi_B)^m \wedge \beta^{n-m} = {\bf 1}_B \, \mu$ in the sense of currents on $\Omega$.
	
 Therefore as before,  Blocki's inequality \eqref{eq:Blocki} yields for any $j\in \N$,
 \begin{eqnarray*}
 \int_B (- u_j)^{m+1}  \mu & = & \int_\Omega (- u_j)^{m+1} (dd^c \phi_B)^m \wedge \beta^{n-m} \\ 
 & \leq& (m +1)! \, \Vert \phi_B \Vert^m_{L^{\infty} (\Omega)}  \int_\Omega (-u_j) (dd^c u_j)^m \wedge \beta ^{n-m}   \\
 & \leq & (m+1)! \, C_0 \Vert \phi_B \Vert_{L^{\infty} (\Omega)}^m,
 \end{eqnarray*}
where $C_0>0$ is a uniform constant.

By Lemma \ref{lem:uniforcv} below, we have that $\Vert \phi_B \Vert_{L^{\infty} (\Omega)} \to 0$ as $\mu (B) \to 0.$

This implies that $\sup_{j \in \N} \int_B (-u_j)^{m+1} d\mu \to 0$ as $\mu (B) \to 0$.
We want to apply this result to the Borel sets $ B_{j,k}:=  \{u_j \leq - k\} $. To estimate the mass of $\mu$ on the sets $B_{j,k}$, we first observe by using the inequalities \eqref{eq:diffuse} that for any $j, k \in \N$, we have

$$
\mu(\{u_j \leq - k\}) \leq \Gamma (c_m (\{u_j \leq - k\})).
$$

By \eqref{eq:CapacityEnergy}, we have for any $j, k \in \N^*$,

$$
c_m ( \{u_j \leq - k\} ) \leq \frac{D_0}{k^{m+1}} E_m(u_j) \leq \frac{D_0 M}{k^{m+1}}=: \varepsilon_k, 
$$
and $\varepsilon_k \to 0$ as $k \to \infty$. Therefore 

$$
\sup_{j \in \N}  \mu ( \{u_j \leq - k\}) \leq \Gamma (\varepsilon_k ) \to 0, \, \text{as} \, k \to + \infty.
$$
This proves the claim and completes the proof of  the Theorem.
\end{proof}

Now we prove the lemma used in the last proof. 

\begin{lemma}\label{lem:uniforcv}
	Let $\Omega \Subset \C^n$ be a bounded strictly $m$-pseudoconvex domain and $\mu \in \mathcal{M}_m(\Omega,\Gamma)$ with $\Gamma$  satisfying the strong Dini  condition \eqref{eq:Dini}. 
	
	Then for any Borel subset $B \subset \Omega$, there exists a unique function $\phi_B \in \mathcal{SH}_m(\Omega) \cap C^0( \bar \Omega)$ such that $\phi_B =0$ in $\partial \Omega$ and $(dd^c \phi_B)^m \wedge \beta^{n-m} = {\bf 1}_B \, \mu$ in the sense of currents on $\Omega$.
	
	Moreover $\Vert \phi_B \Vert_{C^{0} (\bar \Omega)} \to 0$ as $\mu (B) \to 0.$
\end{lemma}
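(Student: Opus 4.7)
\medskip

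\noindent\textbf{Proof plan.} My plan is to reduce everything to the stability Theorem~\ref{thm:Stability} applied with the cut-off measure $\mathbf{1}_B\mu$ rather than $\mu$ itself. The first observation is that $\mathbf{1}_B\mu\le\mu$ so $\mathbf{1}_B\mu$ is also $\Gamma$-diffuse with respect to $c_m$ (with the same $\Gamma$) and has total mass $\mu(B)\le\mu(\Omega)$; hence $\mathbf{1}_B\mu\in\mathcal M_m(\Omega,\Gamma)$. Existence and uniqueness of $\phi_B\in\mathcal{SH}_m(\Omega)\cap C^0(\bar\Omega)$ with $(dd^c\phi_B)^m\wedge\beta^{n-m}=\mathbf 1_B\mu$ and zero boundary values then follows from the Dirichlet solvability result \cite[Theorem 1]{CZ23} already invoked earlier in the excerpt. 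Applying the same theorem to $\mu$ itself, one obtains a continuous solution $\phi^\ast$ of $(dd^c\phi^\ast)^m\wedge\beta^{n-m}=\mu$ with $\phi^\ast|_{\partial\Omega}=0$. The comparison principle for bounded $m$-subharmonic functions then yields $\phi^\ast\le\phi_B\le 0$ in $\Omega$, so I get the uniform bound
\[
M^\ast:=\sup_{B\subset\Omega}\|\phi_B\|_{L^\infty(\Omega)}\le\|\phi^\ast\|_{L^\infty(\Omega)}<+\infty.
\]

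\medskip

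\noindent The core step is to apply Theorem~\ref{thm:Stability} with the measure $\mathbf{1}_B\mu$ (still in $\mathcal M_m(\Omega,\Gamma)$), with $u:=\phi_B$ and $v\equiv 0$. The boundary hypothesis $\liminf_{\partial\Omega}(u-v)\ge 0$ is verified since $\phi_B=0$ on $\partial\Omega$, and $(dd^c u)^m\wedge\beta^{n-m}=\mathbf{1}_B\mu$ by construction. Theorem~\ref{thm:Stability} then produces constants $B_0$ and a continuous function $h$ with $h(0)=0$, depending only on $m$, $\Gamma$ and an upper bound $\mu(\Omega)$ on the mass, such that
\[
\sup_\Omega(-\phi_B)\le B_0\, h\!\left(\int_\Omega(-\phi_B)^m\,d(\mathbf{1}_B\mu)\right)=B_0\, h\!\left(\int_B(-\phi_B)^m\,d\mu\right).
\]
Using the uniform bound from the first step, $\int_B(-\phi_B)^m\,d\mu\le (M^\ast)^m\mu(B)$. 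Since $h$ is continuous at $0$ with $h(0)=0$, letting $\mu(B)\to 0$ forces $\|\phi_B\|_{L^\infty(\Omega)}\to 0$, which is the desired conclusion.

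\medskip

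\noindent\textbf{Main obstacle.} The delicate point is the uniformity of the constants $B_0$ and $h$ in Theorem~\ref{thm:Stability} across the family $\{\mathbf{1}_B\mu:B\subset\Omega\}$; I need the statement \emph{"depending only on $\mu$ and $\Gamma$"} to mean dependence on $\Gamma$ and an upper bound of the total mass, and this is exactly how the constants are produced in \cite{CZ23}. An alternative route that avoids this subtlety would be to apply stability with the original $\mu$ and to control $\int_\Omega(-\phi_B)^m\,d\mu$ by combining the energy identity $(m+1)E_m(\phi_B)=\int_B(-\phi_B)\,d\mu\le M^\ast\mu(B)$ with a nonlinear Sobolev--Poincaré bound such as Proposition~\ref{prop:DCI}; however, under the mere Dini condition \eqref{eq:Dini} the required integral $\ell_\Gamma$ need not be finite, so the argument via the cut-off measure $\mathbf{1}_B\mu$ is the cleaner one.
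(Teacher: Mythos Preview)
Your proof is correct and takes a genuinely different route from the paper's. The paper argues by contradiction: assuming a sequence $(B_j)$ with $\mu(B_j)\to 0$ but $\|\phi_{B_j}\|_{L^\infty}\ge\delta_0$, it first reduces to the case of a \emph{decreasing} sequence of Borel sets, uses the comparison principle to show $\phi_{B_j}\nearrow 0$, then applies Theorem~\ref{thm:Stability} with the \emph{fixed} measure $\mu$ (so that the constants are trivially uniform) to obtain $\|\phi_{B_j}\|_{L^\infty}\le C_0\,h(\|\phi_{B_j}\|_{L^m(\mu)}^m)$, and finally invokes the fine property that $\mu$ does not charge $m$-polar sets to get $\phi_{B_j}\to 0$ $\mu$-a.e., hence $\|\phi_{B_j}\|_{L^m(\mu)}\to 0$ by monotone convergence. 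Your argument is more direct: the comparison with the global solution $\phi^\ast$ gives a uniform $L^\infty$ bound $M^\ast$ in one stroke, and applying stability with the cut-off measure $\mathbf 1_B\mu$ reduces the right-hand side to $(M^\ast)^m\mu(B)$ by pure arithmetic, without any contradiction, monotone reduction, or pluri-fine properties of $\mu$.

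The trade-off is exactly the one you identify: you need the constants $B_0,h$ in Theorem~\ref{thm:Stability} to depend on $\mathbf 1_B\mu$ only through $\Gamma$ and an upper bound on the mass, which is indeed how they arise in \cite{CZ23}; the paper sidesteps this by never changing the measure. One small correction to your closing remark: your ``alternative route'' actually works as well. Since $\gamma$ is non-decreasing it is bounded on $(0,1]$ by $\gamma(1)$, so with $r=m$ one has $\ell_\Gamma=\int_0^1 \gamma(t)\,t^{-m/(m+1)}\,dt\le \gamma(1)(m+1)<+\infty$, and Proposition~\ref{prop:DCI} yields $\int_\Omega(-\phi_B)^m\,d\mu\le C\,E_m(\phi_B)^{m/(m+1)}$ with $(m+1)E_m(\phi_B)=\int_B(-\phi_B)\,d\mu\le M^\ast\mu(B)$. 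This gives a third proof that applies stability with $\mu$ (hence no uniformity issue) while remaining direct.
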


\begin{proof} The existence of $\phi_B$ follows from \cite[Theorem 1]{CZ23}. It remains to prove the second part of the lemma.
	
 We argue by contradiction. Assume that there exists a sequence of Borel sets $(B_j)_{j\in \N ^*}$ in $\Omega$ such that $\mu (B_j) \to 0$ as $j \to + \infty$ and $ \Vert \phi_{B_j} \Vert_{L^{\infty} (\Omega)}\geq \delta_0,$ where $\delta_0 >0$ is a uniform constant and $\phi_{B_j} \in \mathcal{SH}_m(\Omega) \cap C^0( \bar \Omega)$ satisfies $\phi_{B_j}=0$ in $\partial \Omega$ and $(dd^c \phi_{B_j})^m \wedge \beta^{n-m} ={\bf 1}_{ B_j} \, \mu$ on $\Omega.$ Without lost of generality we can assume that $\mu (B_j) \leq 2^{-j}$ for any $j\in \N$.
	
 Assume first that $(B_j)_j $ is a decreasing  sequence and let $B := \cap_{j} B_j$. Then $\mu(B)=0$ and for any $j \in \N$, we have ${\bf 1}_{B_{j+1} } \mu \leq {\bf 1}_{B_j} \mu$ on $\Omega.$ This implies that
	
 $$
 (dd^c \phi_{B_{j+1}})^m \wedge \beta^{n-m} \leq (dd^c \phi_{B_j})^m \wedge \beta^{n-m} \, \, \text{ on} \, \, \Omega,
 $$
with $\phi_{B_{j+1}}=\phi_{B_j}$ in $\partial \Omega.$ By the comparison principle, $(\phi_{B_j})_{j\in \N^*}$ is a increasing sequence in $\Omega$ which converges a.e to $\phi \in \mathcal{ SH}_m( \Omega) \cap L^{\infty}(\Omega)$ with $\phi_{|\partial \Omega}=0.$

Since the Hessian operator is continuous w.r.t the monotone convergence and $\mu(B) =0$, it follows that 

$$
(dd^c \phi_{B_j})^m \wedge \beta^{n-m} \to (dd^c \phi)^m \wedge \beta^{n-m}= {\bf 1}_B \mu=0.
$$
Since $\phi_{|\partial \Omega}=0$, it follows from the comparison principle that $\phi=0$ in $\Omega.$

Now we prove that $\Vert \phi_{B_j} \Vert_{L^{\infty} (\Omega)} \to 0$  as $j \to + \infty.$ 

Since $\mu \in \mathcal{M}_m(\Omega,\Gamma)$, by the weak uniform stability Theorem \ref{thm:Stability}, there exists a uniform contant $C_0 > 0$ such that 
 \begin{equation} \label{eq:StabIneq}
 \Vert \phi_{B_j} \Vert_{L^{\infty} (\Omega)} \leq C_0 h \Big(\Vert  \phi_{B_j} \Vert^m_{L^{m} (\Omega, \mu)}\Big),
\end{equation} 
 where $h(t) \to 0$ as $t \to 0.$ 
	
 Since $(\phi_{B_j})_j \nearrow \phi =0$ a.e in $\Omega$, we have  $(\sup \{\phi_{B_j} ; j \in \N\})^* = \phi=0$ in $\Omega$. Since we know that $c_m(\{\sup \{\phi_{B_j} ; j \in \N\} <  0\})= 0$ (see \cite{Lu15}) and $\mu$ is diffuse w.r.t the $c_m$-capacity, it follows that  $\mu(\{\sup \{\phi_{B_j} ; j \in \N\} < 0\}) = 0$. This  implies that $(\phi_{B_j})_j \nearrow 0$ $\mu$-a.e in $\Omega$ as $j \to + \infty$. Hence by the monotone convergence theorem, the right hand side in \eqref{eq:StabIneq} converges to $0$ as $j \to + \infty$ and then  $\Vert \phi_{B_j} \Vert_{L^{\infty} (\Omega)} \to 0$ as $j \to + \infty$. This contradict the fact that
    $ \Vert \phi_{B_j} \Vert_{L^{\infty} (\Omega)} \geq \delta_0.$

The general case is easily deduced from the first case by setting for any 
 $j \in \N$, $A_j := \bigcup_{k \geq j} B_k$. Then $(A_j)_j$ is a decreasing sequence of Borel sets which decreases to a Borel set $A $. By sub-additivity  we have $ \mu(A_j) \leq 2^{-j+1}$. 
Moreover  since $B_j \subset A_j$, by the comparison principle we have $\phi_{A_j} \leq \phi_{B_j} \leq 0$ in $\Omega$, hence $\Vert \phi_{A_j} \Vert_{L^{\infty}(\Omega)} \geq \Vert \phi_{B_j} \Vert_{L^{\infty}(\Omega)} \geq \delta_0$ for any $j \in \N$. Applying the previous reasoning with $A_j$ instead of $B_j$ we obtain a contradiction.
\end{proof}
 \section{The eigenvalue problem} 

\subsection{ The variational approach : Proof of Theorem 1.1}
  In this section we use a variational method to solve the    eigenvalue  problem stated in the introduction for a twisted complex Hessian operator.
  \begin{proof}  We first observe that since $\Omega$ is $m$-hyperconvex, it admits a bounded negative $m$-subharmonic exhaustion $\rho$ such that $\int_\Omega (dd^c \rho)^m \wedge \beta^{n-m} < \infty$.
  Indeed fix a subdomain $\Omega_0 \Subset \Omega$ and set $K := \bar{\Omega}_0$.  Hence $0 < c_m(K) < + \infty$ and by Lemma \ref{lem:extremalproperties}, the function $\rho := h_K$ defined by the formula \eqref{eq:extremalfonction} is such an exhaustion of $\Omega$.
   Then  $\rho \in \mathcal E^1_m(\Omega)$ and any  $w \in \mathcal E^1_m(\Omega)$ such that $w <  0$  satisfies  $\int_\Omega (-w)^{m +1} d \mu > 0$. Indeed, since $\mu(\Omega) > 0$ there exists a compact set $K \Subset \Omega$ such that $ \mu (K) := \int_K d \mu >0$. Therefore   $\int_\Omega (-w)^{m +1} d \mu  \geq  (- \max_K w)^{m +1} \mu (K) > 0$. Hence  $\lambda_1$ is a  well defined non negative real number and by homogeneity, we have
	\begin{equation} \label{eq:VariatFormula2}
	\lambda_1^{m} =   \inf \{ E_m (w) \,  ; \,  w \in \mathcal E^1_m (\Omega), \,   I_m  (w) = 1\},
	\end{equation} 
 where we set $I_m (\phi) = (m+1)^{-1} \int_\Omega (-\phi)^{m+1} d \mu$.
	
Moreover, by  Lemma \ref{lem:Sob}, there exists a constant $A > 0$ such that for any $w \in \mathcal E^1_m (\Omega)$
$$
I_m (w) \leq A \, E_m(w).
$$

In particular  we  conclude that $\lambda_1^m \geq A^{-1} >0$.

On the other hand, by definition there exists a minimizing sequence $(w_j)_{j \in \N}$ in $\mathcal E^1_m (\Omega)$ such that $  I_{m}   (w_j) = 1$ for any $j \in \N$ and
$$
\lim_{j \to + \infty} E_m (w_j) = \lambda_1^m.
$$

By construction, the sequence $(E_m(w_j))_{j \in \N}$ is bounded. It follows  from Lemma  \ref{lem:Sob} applied to the Lebesgue measure $\lambda_{2n}$, that the sequence $(w_j)_{j \in \N}$ is bounded in $L^{m+1} (\Omega,\lambda_{2n})$. Extracting a subsequence if necessary we can assume that $(w_j)$ converges weakly to $w \in \mathcal{ SH}_m (\Omega)$ and a.e. in $\Omega$ (wrt the Lebesgue measure). Hence $(w_j)$ converges to $w$  in $L_{loc}^{1} (\Omega)$.  By lower semi-continuity of the energy functional, it follows that $w \in \mathcal E^1_m (\Omega)$ and 

$$
\displaystyle{ E_m (w) \leq \liminf_{j \to + \infty} E_m (w_j) = \lambda_1^m}.
$$

Since $\sup_j E_m (w_j) =: C < + \infty$, it follows from  Lemma \ref{lem:Sob} that  
\begin{equation} \label{eq:FundEq1}
\lim_{j \to + \infty} \int_\Omega(-w_j)^{m+1} d \mu =  \int_\Omega(-w)^{m+1} d \mu.
\end{equation}
Hence  $I_{m} (w) = 1$ and 
$w\in \mathcal E^1_m (\Omega)$ is an "extremal" function  for the eigenvalue problem i.e.
$$
\lambda_1^m = \frac{E_m (w)}{I_{m}  (w)}\cdot 
$$

 Since $I_{m}  (w) = 1$ it follows that $u_1 := w \not \equiv 0$ in $\Omega$. To prove that $(\lambda_1,u_1)$ is a solution to the eigenvalue problem, consider the following functional defined for $\phi \in \mathcal E^1_m (\Omega)$, by the formula 
$$
\Phi_{m} (\phi) := E_m (\phi) - \lambda_1^m I_{m} (\phi), 
$$
and observe that when $\phi$ is smooth then 
$$
\Phi_{m}' (\phi) = - (dd^c \phi)^m \wedge \beta^{n-m}  +  \lambda_1^m (-\phi)^m  \mu,
$$
This means that the eigenvalue equation is the Euler-Lagrange equation of the functional $\Phi_m $ on $\mathcal E^1_m(\Omega)$.
Therefore it's enough to minimize the functional $\Phi_m$ on  $\mathcal E_m^1(\Omega)$.

As observed before, for any $\phi \in \mathcal E_m^1 (\Omega)$ with $\phi \not \equiv 0$, we have $ I_{m} (\phi) >  0$ and then
$$
\Phi_m (\phi) :=  I_{m}  (\phi) \left(\frac{E_m (\phi)}{I_{m}  (\phi)}  - \lambda_1^m\right) \geq 0, 
$$
by definition of   $\lambda_1. $ Since  $\Phi_m (u_1) = 0$,
this means that the functional $\Phi_m$ achieves its minimum on  $\mathcal E_m^1 (\Omega)$ at $u_1$.
Therefore $u_1$ is a kind of "critical point" of the functional $ \Phi_m$. To prove this claim, we will use a tricky argument which goes back to \cite{BBGZ13}.

Fix a "test function" $\psi \in \mathcal{E}_m^0(\Omega)$ and consider  the path 
$\phi_t = u_1+ t \psi$ which belongs to $\mathcal E_m^1 (\Omega)$ when $0 \leq t \leq 1$ by convexity. However when $t < 0$,  this is no longer the case, and so we consider its $m$-subharmonic envelope $\tilde \phi_t := P_m (\phi_t)$ in $\Omega$ defined by the formula 
$$
P_m (\phi_t):=\sup \{ v \in \mathcal{E}^1_m(\Omega) \, | \, v \leq \phi_t  \}\cdot
$$
Since $ u_1 \leq \phi_t $ when $t < 0$, it follows that $u \leq P_m(\phi_t)$ when $t < 0$, hence     $\tilde \phi_t \in \mathcal{E}^1_m (\Omega)$ for any $t \in [-1,+1]$ (see \cite{Ceg98, Lu15}).

Now consider the one variable function defined for $t \in [-1,+1]$ by 
$$
h(t) :=  E_m \circ P_m (\phi_t ) - \lambda_1^m I_{m} (\phi_t).
$$
We claim that the function $h$ is differentiable in $ [-1,+1]$, non negative and attains its minimum at $t = 0$. Indeed observe first that $h(0) =0$.
Moreover since for any $t \in [-1,1]$,  $ \tilde \phi_t  \leq \phi_t < 0$ in $\Omega$, it follows that $  I_{m} (\phi_t) \leq  I_{m} (\tilde \phi_t)$ and then
$$
h (t) \geq E_m ( \tilde \phi_t)  - \lambda_1^m I_{m} (\tilde \phi_t) = \Phi (\tilde \phi_t) \geq 0,
$$
for any $t \in [-1,1]$, which proves our claim.
By Lemma \ref{lem:FirstVariation}, we have 
$$
\frac{ d}{d t}  (E_m \circ P_m) (\phi_t) = \int_{\Omega}(- \dot  \phi_t) (dd^c P_m( \phi_t))^m \wedge \beta^{n-m}.
$$

Therefore $h$ is differentiable in $[-1,+1]$ and  by Lemma \ref{lem:FirstVariation}, we have for any $t \in [-1,1], $
\begin{eqnarray*}
	h'(t) & = & \frac{d}{d t} E_m (\tilde \phi_t) - \lambda_1^m  \frac{d}{d t}  I_{m}  ( \phi_t) \\
	& = & \int_\Omega (- \dot  \phi_t) (dd^c \tilde \phi_t)^m \wedge \beta^{n-m}   +  \lambda_1^m  \int_\Omega  \dot \phi_t (-\phi_t)^m d \mu. 
\end{eqnarray*}

Since $h$ achieves its minimum at $0$, it follows that $h'(0) = 0$, which implies the following identity: 
$$
\int_\Omega \psi \, (dd^c u_1)^m \wedge \beta^{n-m}  =  \lambda_1^m  \int_\Omega  \psi \, (-u_1)^m d \mu, 
$$
for any  $\psi \in \mathcal E^0_m(\Omega) $.  Since any smooth test function $\chi$ in $\Omega$ can be written as $\chi = \psi_1 - \psi_2$, where $\psi_1, \psi_2 \in \mathcal E^0_m (\Omega) $ (see \cite[Lemma 3.10]{Lu15}), it follows that 
$$
(dd^c u_1)^m \wedge \beta^{n-m}  = \lambda_1^m  (-u_1)^m  \mu. 
$$
in the  sense of currents on $\Omega$.

Now assume that $\Omega$ is strictly $m$-pseudoconvex and $(m,p)$ satisfies the conditions \eqref{eq:Holder}. To prove H\"older continuity, observe that  $(dd^c u_1)^m \wedge \beta^{n-m} = (-\lambda_1 u_1)^m g \beta^n$
has a density $g_1 :=  (-\lambda_1 u_1)^m g $ and by Lemma \ref{lem:Integ} below we have $g_1 \in L^r (\Omega)$ for some $r > n\slash m$. 

By \cite{Ch16} there exists  $v \in \mathcal{SH}_m(\Omega) \cap C^{\alpha} (\bar \Omega)$ for some $\alpha \in ]0,1[$  such that $(dd^c v)^m \wedge \beta^{n-m} = g_1 \beta^n$ on $\Omega$ and $v\equiv 0$ in $\partial \Omega$. 
Now we have two solutions $v, u_1 \in \mathcal E_m^1(\Omega)$ of the complex Hessian  equation $(dd^c \phi)^m \wedge \beta^{n-m} = g_1 \beta^n$. By uniqueness in $\mathcal E_m^1(\Omega)$ it follows that $v= u_1$, hence $u_1 \in C^{\alpha} (\bar \Omega)$ and $u_1 \equiv 0$ in $\partial \Omega$ (see \cite[Theorem 1.1]{Lu15}).
\end{proof}
Let us prove the lemma used in the previous proof.
\begin{lemma} \label{lem:Integ}  Let  $\Omega \Subset \C^n$ be a $m$-hyperconvex domain, $0 \leq g \in L^p(\Omega)$ with  $(m,p)$ satisfying the following conditions : 
\begin{equation} \label{eq:HolderHessian}
 (n-1) \slash 2 < m \leq n  \, \, \, \text{ and} \, \, \, \, p > p^*(m,n),
\end{equation}
where $p^*(m,n) \geq n\slash m$ is given by the formula \eqref{eq:Conditionp*} below.

 Then  there exists an exponent $r $  depending only on $p, m$ and $ n$,  such that   $n\slash m  < r < p$ and for any 
$\phi \in \mathcal E_m^1(\Omega)$, $g_\phi := (-\phi)^m g \in L^{r}(\Omega)$.
\end{lemma}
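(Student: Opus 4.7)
The plan is to apply H\"older's inequality to decouple $g$ and $\phi$ in the integral $\int_\Omega g_\phi^r\,dV_{2n}=\int_\Omega(-\phi)^{mr}g^r\,dV_{2n}$, and then to control the resulting power of $(-\phi)$ by the non-linear Sobolev--Poincar\'e estimate of Corollary \ref{coro:Integrability} applied to Lebesgue measure. For $1<r<p$, H\"older with conjugate exponents $p/r$ and $p/(p-r)$ yields
\begin{equation*}
\int_\Omega(-\phi)^{mr}g^r\,dV_{2n}\;\leq\;\|g\|_{L^p}^{\,r}\Bigl(\int_\Omega(-\phi)^{\sigma(r)}\,dV_{2n}\Bigr)^{(p-r)/p},\qquad \sigma(r):=\frac{mrp}{p-r}.
\end{equation*}

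Next, I would invoke Corollary \ref{coro:Integrability} with the Lebesgue measure (i.e.\ $f\equiv 1$, for which any integrability exponent is admissible): letting the auxiliary exponent tend to $+\infty$ in that statement produces $\int_\Omega(-\phi)^{\sigma}\,dV_{2n}\leq C\,E_m(\phi)^{\sigma/(m+1)}$ for every $\sigma<\kappa:=(m+1)n/(n-m)$; the marginal case $m=n$ is handled separately since finite-energy plurisubharmonic functions already lie in every $L^s(\Omega)$. It then suffices to exhibit $r$ with $n/m<r<p$ and $\sigma(r)<\kappa$. Solving $mrp/(p-r)<\kappa$ for $r$ rearranges to $r<\kappa p/(mp+\kappa)$, so the admissible window for $r$ is non-empty precisely when $\kappa p/(mp+\kappa)>n/m$. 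Using $\kappa-n=n(2m+1-n)/(n-m)$, this simplifies to $p>p^*(m,n)=n(m+1)/\bigl(m(2m+1-n)\bigr)$, and the denominator is positive exactly under the hypothesis $m>(n-1)/2$, which is what forces that restriction into the statement.

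The main obstacle is essentially algebraic bookkeeping: one must ensure that the strict inequalities $r>n/m$ and $\sigma(r)<\kappa$ can be realised simultaneously when $p$ lies strictly above $p^*(m,n)$, and verify that the threshold produced by this scheme coincides with the constant named in \eqref{eq:Conditionp*} and majorises $n/m$ (the latter reducing to the trivial $n\geq m$). Once $r$ is chosen in this admissible window, combining the two displayed bounds gives $\int_\Omega g_\phi^r\,dV_{2n}<+\infty$, that is $g_\phi\in L^r(\Omega)$, as required.
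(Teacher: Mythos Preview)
Your approach is essentially the paper's: apply H\"older with exponents $p/r$ and $p/(p-r)$, then control $\int_\Omega(-\phi)^{\sigma(r)}\,dV_{2n}$ via Corollary~\ref{coro:Integrability} for Lebesgue measure. The one genuine difference is in how you invoke that corollary. You let the auxiliary integrability exponent go to $+\infty$ (since $1\in L^\infty$), obtaining the clean bound $\sigma(r)<\kappa:=(m+1)n/(n-m)$ and hence the linear threshold $p^*_S=n(m+1)/\bigl(m(2m+1-n)\bigr)$. The paper instead reuses the \emph{same} $p$ as the one governing $g$, which gives the weaker condition $\tau(r,p,m)<\dfrac{n(m+1)(p-1)}{p(n-m)}$ and leads, after solving a quadratic in $p$, to the more complicated constant \eqref{eq:Conditionp*}. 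Because $\Delta=\ell^2(n-m)^2+4\ell mn>\ell^2(n-m)^2$, one checks $p^*_S<p^*(m,n)$ strictly when $m<n$; so your bound is sharper.

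The only inaccuracy in your proposal is the assertion that your threshold ``coincides with the constant named in \eqref{eq:Conditionp*}'': it does not. Your argument actually proves a slightly stronger statement than the lemma (a smaller admissible $p^*$), from which the lemma as written follows a fortiori. Everything else---the H\"older step, the observation that $r<\kappa p/(mp+\kappa)$ automatically forces $r<p$, the reduction of the positivity of the window to $m>(n-1)/2$, and the check $p^*_S\geq n/m$---is correct.
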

\begin{proof}
Fix $\phi \in \mathcal E_m^1(\Omega)$ and $r >1$ such that $1 < r <p$ and set $s := p\slash r >1$. 
 By H\"older inequality, we have
\begin{eqnarray*}
\int_\Omega g_\phi^r \beta^n &=& \int_\Omega  (-\phi)^{m r} g^{r}  \beta^n\\
& \leq & \left(\int_\Omega g^{rs}  \beta^n\right)^{1 \slash s}   \left(\int_\Omega  (-\phi)^{m rs'} \beta^n\right)^{1 \slash s'} \\
& \leq & \Vert g\Vert_{L^p(\Omega)}^{p\slash s} \left(\int_\Omega  (-\phi)^{\tau} \beta^n\right)^{1 \slash s'} 
\end{eqnarray*}
where $s' := s \slash (s-1)$ and  $\tau = \tau (r,p,m) := m rs'  = mrs \slash (s-1)$. 
Since $s = p\slash r$, we have   $ \tau (r,p,m) = m r p \slash(p-r)$.

From this inequality we see that $g_\phi \in L^r(\Omega)$ if $\phi \in L^\tau(\Omega)$.
By Corollary \ref{coro:Integrability} applied to the Lebesgue measure,  $\phi \in L^{\tau} (\Omega)$  if the following condition holds 
\begin{equation} \label{eq:Conditionr}
\tau = \tau (r,p,m) = \frac{m r p}{p-r}  < \frac{ n (m+1) (p-1)}{ p(n-m)}.
\end{equation}
Observe that if $m = n$ this condition is always satisfied for any exponent $r \in ]n\slash m,p[$.

Assume now that $m < n$. We want to find a condition on $(m,n,p)$ such that  the condition \eqref{eq:Conditionr} is satisfied with an exponent $r \in ]n\slash m ,  p[$.

Observe that the function $\tau$ is decreasing in $p$ and the right hand side is less than 
$\frac{ n (m+1)}{n-m}$. Therefore  a necessary condition for \eqref{eq:Conditionr} to hold with $r > n\slash m$ is that $m r < (m+1)n  \slash (n-m)$ and the real number 
\begin{equation} \label{eq:Conditionk}
 \ell = \ell (m,n) := (m+1)  \slash (n-m) > 1,
 \end{equation}
 which means that $m > (n-1)\slash 2$.

Now assume that this condition holds so that $\ell > 1$ and observe that the function $\tau$ is increasing in $r$.
Therefore by continuity there exists an $r \in ]n\slash m,p[$ so that \eqref{eq:Conditionr} holds if and only if it holds with $r = n\slash m$ i.e. 
\begin{equation} \label{eq:ConditionTau}
\tau (n\slash m,p,m) <  \frac{ \ell n (p-1)}{ p}.
\end{equation}  
The inequality \eqref{eq:ConditionTau} is equivalent to the following one
$$
m (\ell-1) p^2 - \ell(n + m) p + \ell n > 0.
$$
This is a quadratic polynomial in $p$ whose discriminant is positive.
Hence it has two zeros $p_* < p^*$ so that it is positive when $p > p^*$, where
\begin{equation} \label{eq:Conditionp*}
p^*= p^*(m,n) := \frac{ \ell(n+m) + \sqrt{\Delta}}{2m(\ell-1)}, \, \, \text{and} \, \, \, \Delta := \ell^2 (n-m)^2 + 4 \ell m n.
\end{equation}
 It's easy to check that $p^* > n\slash m$ which proves the Lemma. 
 
 Observe that if $m \to n$ then $\ell \to \infty$ and $p^*(m,n)\to 1$.
\end{proof}

\subsection{The monotonicity property of $\lambda_1 (\Omega)$}
Let $\Omega' \Subset \Omega \Subset \C^n$ be two bounded  $m$-hyperconvex domains and $\mu \in \mathcal M (\Omega,\gamma)$. Then we have the following comparison theorem
\begin{theorem} Let $\mu' := {\bf 1}_{\Omega'} \mu$ be the restriction of $\mu$ to $\Omega'$. Then we have 
$$
\lambda_1 (\Omega,\mu) \leq \lambda_1 (\Omega',\mu').
$$
\end{theorem}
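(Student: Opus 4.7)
The plan is to deduce the bound from the variational characterization \eqref{eqR1} of $\lambda_1(\Omega,\mu)^m$ by using, as a test function on $\Omega$, the extension by zero of an eigenfunction on $\Omega'$. First I verify that Theorem~\ref{thm:Variational1} applies to $(\Omega',\mu')$: $\Omega'$ is $m$-hyperconvex by assumption, and for any compact $K\subset\Omega'$ the inclusion $\mathcal{SH}_m(\Omega)|_{\Omega'}\subset\mathcal{SH}_m(\Omega')$ yields the capacity monotonicity $c_{m,\Omega}(K)\leq c_{m,\Omega'}(K)$, so the strong $\Gamma$-diffuseness of $\mu$ on $\Omega$ passes to $\mu':={\bf 1}_{\Omega'}\mu$ on $\Omega'$. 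The theorem therefore produces an eigenfunction $u'_1\in\mathcal{E}^1_m(\Omega')$ with $u'_1\not\equiv 0$ and $\lambda_1(\Omega',\mu')^m = E_{m,\Omega'}(u'_1)/I_{m,\mu',\Omega'}(u'_1)$.

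Next I extend $u'_1$ by zero. Pick a decreasing sequence $(u'_j)\subset\mathcal{E}^0_m(\Omega')$ with $u'_j\searrow u'_1$ and $\sup_j E_{m,\Omega'}(u'_j)<\infty$. Each $u'_j$ extends continuously by zero across $\partial\Omega'$, so the function $\tilde u_j$ equal to $u'_j$ on $\Omega'$ and to $0$ on $\Omega\setminus\Omega'$ is continuous on $\Omega$, and a standard gluing argument for negative $m$-subharmonic functions vanishing on $\partial\Omega'$ places it in $\mathcal{E}^0_m(\Omega)$. The Hessian measure $(dd^c\tilde u_j)^m\wedge\beta^{n-m}$ coincides with $(dd^c u'_j)^m\wedge\beta^{n-m}$ on $\Omega'$ and vanishes on $\Omega\setminus\bar{\Omega}'$, while any residual mass carried on $\partial\Omega'$ is annihilated by $\tilde u_j\equiv 0$ there. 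Consequently
\begin{equation*}
E_{m,\Omega}(\tilde u_j)=E_{m,\Omega'}(u'_j)\qquad\text{and}\qquad I_{m,\mu,\Omega}(\tilde u_j)=I_{m,\mu',\Omega'}(u'_j).
\end{equation*}

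The decreasing sequence $(\tilde u_j)\subset\mathcal{E}^0_m(\Omega)$ has uniformly bounded energies, so $\tilde u_1:=\lim_j\tilde u_j$ belongs to $\mathcal{E}^1_m(\Omega)$, coincides with $u'_1$ on $\Omega'$, vanishes on $\Omega\setminus\Omega'$, and is not identically zero. By lower semi-continuity of $E_m$ (Lemma~\ref{lem:FirstVariation}) and the continuity statement of Theorem~\ref{thm:continuity}, passing to the limit gives $E_{m,\Omega}(\tilde u_1)\leq E_{m,\Omega'}(u'_1)$ and $I_{m,\mu,\Omega}(\tilde u_1)=I_{m,\mu',\Omega'}(u'_1)>0$. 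Plugging $\tilde u_1$ into the Rayleigh-quotient formula \eqref{eqR1} then yields
\begin{equation*}
\lambda_1(\Omega,\mu)^m\leq\frac{E_{m,\Omega}(\tilde u_1)}{I_{m,\mu,\Omega}(\tilde u_1)}\leq\frac{E_{m,\Omega'}(u'_1)}{I_{m,\mu',\Omega'}(u'_1)}=\lambda_1(\Omega',\mu')^m,
\end{equation*}
and taking $m$-th roots concludes the argument. The only delicate point is the gluing/energy-preservation step at the level of $\mathcal{E}^0_m$, which rests on the continuous vanishing of $\mathcal{E}^0_m(\Omega')$-functions on $\partial\Omega'$ combined with the localization of the Hessian operator; everything else is a direct application of the variational principle.
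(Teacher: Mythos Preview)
Your argument has a genuine gap at the gluing step: the extension by zero of a negative $m$-subharmonic function with zero boundary values is \emph{not} in general $m$-subharmonic on the larger domain. A one-variable counterexample already shows this: take $\Omega'=\{|z|<1\}\Subset\Omega=\{|z|<2\}$ and $u(z)=|z|^2-1\in\mathcal{E}^0_1(\Omega')$. At the boundary point $z_0=1$ the extension $\tilde u$ equals $0$, while the mean value of $\tilde u$ over any small disc centred at $z_0$ is strictly negative (half of the disc lies in $\Omega'$ where $\tilde u<0$). Hence the sub-mean-value inequality fails and $\tilde u\notin\mathcal{SH}_1(\Omega)$. The same obstruction occurs for every $m$; there is no ``standard gluing argument'' that repairs this, because the gluing lemma produces $\max(u,0)=0$ on $\Omega'$, not $u$ itself. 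A related minor point: functions in $\mathcal{E}^0_m(\Omega')$ need not be continuous in $\Omega'$, so the continuity claim for $\tilde u_j$ is also unjustified.

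The paper circumvents this by invoking the \emph{subextension theorem} (\cite{CKZ11}): given $w'\in\mathcal{E}^1_m(\Omega')$ there exists $w\in\mathcal{E}^1_m(\Omega)$ with $w\leq w'$ on $\Omega'$ and $E_{m,\Omega}(w)\leq E_{m,\Omega'}(w')$. The subextension is genuinely smaller than $w'$ on $\Omega'$ (it is not the trivial extension), and that is precisely what allows it to remain $m$-subharmonic across $\partial\Omega'$. From $w\leq w'$ one gets $I_{\Omega',\mu'}(w')\leq I_{\Omega,\mu}(w)$, and the Rayleigh-quotient comparison then goes through exactly as you wrote. So your overall strategy (transport an extremal function from $\Omega'$ to $\Omega$ and compare quotients) is correct, but the transport mechanism must be subextension, not extension by zero.
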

This  result was proved in \cite{BaZe23} for the eigenvalues of the  complex Monge-Ampère operator when $\mu := g dV$ with $g \in L^p(\Omega)$ ($p >1$).
\begin{proof}
We denote by $E_{\Omega} (u) := \frac{1}{m+1} \int_\Omega (-u) (dd^c u)^m \wedge \beta^{n-m}$ for $u \in \mathcal E_m^1(\Omega)$ and 
$I_{\Omega,\mu} (u) :=  \frac{1}{m+1} \int_\Omega (- u)^{m +1} d \mu$.

By Theorem \ref{thm:Variational1}  for $(\Omega',\mu')$ there exists $w' \in \mathcal E_m^1(\Omega')$ such that
$$
\lambda_1 (\Omega',\mu') = \frac{E_{\Omega'}(w')}{I_{\Omega',\mu'} (w')}.
$$
By the subextension Theorem (see \cite{CKZ11} for $m=n$), it follows that there exists $w \in \mathcal E_m^1(\Omega)$ such that $w \leq w'$ in $\Omega'$ and $ E_{\Omega}(w) \leq E_{\Omega'}(w')$. Since $w \leq w'$ in $\Omega'$, it follows that $I_{\Omega',\mu'} (w') \leq  I_{\Omega,\mu} (w)$, hence
$$
 \frac{E_{\Omega'}(w')}{I_{\Omega',\mu'} (w')} \geq  \frac{E_{\Omega}(w)}{I_{\Omega,\mu} (w)}.
 $$
 By Theorem \ref{thm:Variational1} for $(\Omega,\mu)$ we conclude that $ \lambda_1 (\Omega',\mu') \geq \lambda_1 (\Omega,\mu).$
\end{proof}

\smallskip

%\section{Some applications}

\smallskip

\section{A variational approach to more general equations}

\subsection{A general Dirichlet theorem} We consider the following more general Dirichlet problem
\begin{equation}\label{eq:DirPro}
\left\{\begin{array}{lcl} 
 (dd^c u)^m \wedge \beta^{n - m} = G (\cdot,u) \mu  &\hbox{on}\  \Omega, \\
  u = 0 & \,  \hbox{in}\  \partial \Omega, \\
 u <  0\, \, \, \, & \hbox{in}\  \Omega,
\end{array}\right.
\end{equation}
where $\Omega \Subset \C^n$ is bounded domain, $ G : \Omega \times ]- \infty, 0]  \longrightarrow [0,+ \infty[$ is a given Borel function, $\mu$ a positive Borel measure on $\Omega$ satisfying some conditions and $u \in \mathcal {SH}_m(\Omega) \cap L^{\infty}(\Omega)$.  

 We define  $H : \Omega \times \R^- \longrightarrow \R^+$ as follows
$$
H (z,t) := \int_t^0 G(z,s) d s,
$$ 
for $(z,t) \in \Omega \times ]-\infty,0]$.

We consider the following hypotheses : 
\begin{itemize}
\item $(H0)$ $\Omega$ is a bounded $m$-hyperconvex domain and $\mu$ is a positive Borel measure on $\Omega$ such that  $(\Omega,\mu)$ is  strongly $\Gamma$-diffuse in the sense of Definition \ref{def:Diffuse}.

\item $(H1)$  for $\mu$-a.e. $z \in \Omega$, the function $t \longmapsto G(z,t)$ is  continuous  on $]-\infty,0]$;
%$\mathcal{E}_m^1 (\Omega) \subset L^{m+1} (\Omega,\mu)$,
\item $(H2)$ there exists $\theta_1< \lambda_1^m \slash (m +1)$ such that
$$
\limsup_{t \to - \infty} \frac{\left(\sup_{z \in \Omega} H(z,t)\right)}{\vert t\vert^{m + 1}} < \theta_1 < \lambda_1^m \slash (m +1),
$$

\item $(H3)$ If $G (z,0)\equiv 0$ in $\Omega$,  there exists $\theta_2 > \lambda_1^m \slash (m +1)$ such that  
$$
\liminf_{t \to 0^-} \frac{ H (z,t)}{\vert t\vert^{m +1}} > \theta_2 > \lambda_1^m \slash (m +1),
$$ 
for  $\mu$-a.e. $z \in \Omega.$
\end{itemize}
Here $\lambda_1 := \lambda_1(\Omega,\mu)$ is the first eigenvalue of the twisted complex Hessian operator defined by the formula \eqref{eq:VP-Hessian}.

\smallskip
We define the corresponding functional on $\mathcal{E}^1_m (\Omega)$ by the formula
$$
 \Phi_{G,\mu} (\phi) := E_m (\phi)  - \int_\Omega H (z,\phi(z)) d \mu (z), \, \, \, \phi \in \mathcal{E}^1_m (\Omega)
$$

Formally the  Euler-Lagrange equation of the functional  $\Phi_{G,\mu} $ is precisely the Hessian equation (\ref{eq:DirPro}) as we will see.

We will prove the following result using the variational approach.

\begin{theorem}  \label{thm:Variational2} Assume that $(\Omega,\mu,G)$ satisfies the assumptions $(H_0)$, $(H1)$ and $(H2)$. Then the functional $\Phi_{G,\mu} $ has the following properties :

1) the functional  $\Phi_{G,\mu} $ is well defined and coercive on  $ \mathcal{E}^1_m (\Omega)$ and achieves its minimum   at a function $\varphi \in \mathcal{E}^1_m (\Omega,C)$, for some $C > 0$ large enough;

2) the function $\varphi$ is a critical point of the functional $\Phi_{G,\mu} $, hence it is a  solution to the Hessian equation i.e.

$$(dd^c \varphi)^m \wedge \beta^{n-m} = G(\cdot,\varphi) \, \mu,$$
 in the sense of measures on $\Omega$; 

3) Moreover if $\Omega$ is strictly $m$-pseudoconvex, $G$ has a polynomial growth of degree $m$ and  $\mu = g \beta^n$ white $g \in L^p(\Omega)$, where $(p,m)$ satisfy the conditions \eqref{eq:Holder}, then  $\varphi \in \mathcal C^{\alpha} (\bar\Omega)$ for some $\alpha \in ]0,1[$ and $\varphi$ is a solution to the Dirichlet problem  (\ref{eq:DirPro}).

4) If  $G(z,0) \equiv 0$ in $\Omega$ and $G$ satisfies $(H3)$, $\varphi$ is a non trivial solution i.e. $\varphi < 0$ in $\Omega$.
\end{theorem}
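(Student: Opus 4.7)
My plan is to follow the variational recipe used in Theorem \ref{thm:Variational1}, replacing the homogeneous nonlinearity $\lambda_1^m I_m(\phi)$ by the general functional $\phi \mapsto \int H(z,\phi) d\mu$, and to handle the four assertions in order. From $(H2)$, together with the monotonicity of $H$ in $t$ coming from $G \geq 0$, one extracts constants $\theta_1 < \lambda_1^m/(m+1)$ and $M > 0$ such that $H(z,t) \leq \theta_1 |t|^{m+1} + M$ for all $z \in \Omega$ and $t \leq 0$. The inclusion $\mathcal{E}^1_m(\Omega) \subset L^{m+1}(\Omega,\mu)$ from Theorem \ref{thm:continuity}(1) then makes $\Phi_{G,\mu}$ well defined, and the variational definition of $\lambda_1$ yields
$$
\Phi_{G,\mu}(\phi) \geq \left(1 - \frac{(m+1)\theta_1}{\lambda_1^m}\right) E_m(\phi) - M \mu(\Omega),
$$
which gives coercivity in $E_m$. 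A minimizing sequence thus sits in some $\mathcal{E}^1_m(\Omega, C)$, which is $L^1_{loc}$-compact by Lemma \ref{lem:FirstVariation}(4); the $L^1_{loc}$-lower semi-continuity of $E_m$ combined with continuity of the twist term on $\mathcal{E}^1_m(\Omega, C)$ (derived from Theorem \ref{thm:continuity}(2), the continuity assumption $(H1)$, and dominated convergence along a subsequence converging $\mu$-a.e.) produces a minimizer $\varphi \in \mathcal{E}^1_m(\Omega, C)$.

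For the Euler-Lagrange step, fix $\psi \in \mathcal{E}^0_m(\Omega)$ and set $\phi_t = \varphi + t\psi$, $\tilde\phi_t := P_m(\phi_t)$ for $t \in [-1,1]$. Adapting the envelope trick from Theorem \ref{thm:Variational1}, define
$$
h(t) := E_m(\tilde\phi_t) - \int_\Omega H(z, \phi_t(z)) d\mu(z).
$$
Since $G \geq 0$ makes $H(z,\cdot)$ non-increasing, and $\tilde\phi_t \leq \phi_t$, one has $H(z,\tilde\phi_t) \geq H(z,\phi_t)$, which is the crucial sign ensuring
$$
h(t) \geq E_m(\tilde\phi_t) - \int_\Omega H(z,\tilde\phi_t) d\mu = \Phi_{G,\mu}(\tilde\phi_t) \geq \Phi_{G,\mu}(\varphi) = h(0).
$$
Thus $h$ attains its minimum at $0$. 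Differentiating $h$ using Lemma \ref{lem:FirstVariation} together with $(H1)$ and the chain rule yields
$$
0 = h'(0) = - \int_\Omega \psi (dd^c \varphi)^m \wedge \beta^{n-m} + \int_\Omega \psi \, G(z,\varphi) d\mu,
$$
and decomposing any smooth test function as a difference of elements of $\mathcal{E}^0_m(\Omega)$ via \cite[Lemma 3.10]{Lu15} gives the Hessian equation in the sense of currents.

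Under the extra hypotheses of assertion 3, the right hand side is $G(z,\varphi) g \beta^n$; polynomial growth of $G$ in $t$ combined with Lemma \ref{lem:Integ} applied to $|\varphi|^m g$ puts this density in $L^r(\Omega)$ for some $r > n/m$. The Hölder-continuous solution of the corresponding Dirichlet problem furnished by \cite{Ch16}, together with uniqueness in $\mathcal{E}^1_m(\Omega)$ from \cite[Theorem 1.1]{Lu15}, identifies it with $\varphi$, so $\varphi \in C^\alpha(\bar\Omega)$ with zero boundary values. For assertion 4, when $G(\cdot,0) \equiv 0$ the trivial function gives $\Phi_{G,\mu}(0) = 0$, so it suffices to exhibit a strictly negative competitor. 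Testing against $\epsilon u_1$, with $u_1$ the first eigenfunction from Theorem \ref{thm:Variational1} (so that $E_m(u_1) = \lambda_1^m I_m(u_1)$), we have
$$
\Phi_{G,\mu}(\epsilon u_1) = \epsilon^{m+1} \lambda_1^m I_m(u_1) - \int_\Omega H(z,\epsilon u_1) d\mu,
$$
and $(H3)$ combined with Fatou's lemma gives $\liminf_{\epsilon \to 0^+} \epsilon^{-(m+1)} \int H(z,\epsilon u_1) d\mu \geq (m+1)\theta_2 I_m(u_1) > \lambda_1^m I_m(u_1)$; hence $\Phi_{G,\mu}(\epsilon u_1) < 0$ for small $\epsilon > 0$, so the minimum value is strictly negative, and strict negativity $\varphi < 0$ on $\Omega$ follows from the maximum principle for $m$-subharmonic functions.

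I expect the main obstacle to be the envelope step in the Euler-Lagrange analysis: the inequality $h(t) \geq \Phi_{G,\mu}(\tilde\phi_t)$ rests entirely on the monotonicity $\partial_t H \leq 0$, i.e., on $G \geq 0$, and without this sign the standard envelope trick from \cite{BBGZ13} collapses. A secondary technical point is the continuity of $\phi \mapsto \int H(z,\phi) d\mu$ on $E_m$-bounded sets used in the existence step and the justification of differentiating under the integral for $h$; both reduce, via the growth bound extracted from $(H2)$, to Theorem \ref{thm:continuity} together with dominated convergence.
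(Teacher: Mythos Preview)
Your proposal is correct and follows essentially the same route as the paper: the paper first isolates the well-definedness, lower semi-continuity and coercivity of $\Phi_{G,\mu}$ in a separate lemma (Lemma \ref{eq:lem:Fun}) using exactly the bound $H(z,t)\leq \theta_1|t|^{m+1}+C_0$ extracted from $(H2)$ and the monotonicity of $H$, then minimizes on a compact sublevel set, applies the envelope trick $h(t)=E_m(P_m(\phi_t))-\int H(z,\phi_t)\,d\mu$ with the same key inequality $H(z,\tilde\phi_t)\geq H(z,\phi_t)$, and tests against scalings of the first eigenfunction for assertion 4 via Fatou. Your treatment is in fact slightly more complete on one point: the paper's proof of assertion 4 only concludes $\varphi\not\equiv 0$, whereas you correctly note that $\varphi<0$ everywhere then follows from the (strong) maximum principle for subharmonic functions.
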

Here   $G$ has a polynomial growth of degree $m$ means that there exists a constant $M_0 > 0$ such that for $\mu$-a.e. $z \in \Omega$ and any $t <  0$, we have
$$
G(z,t) \leq M_0 \vert t \vert^m.
$$

We first prove the following  lemma.
\begin{lemma} \label{eq:lem:Fun}
Assume that $(\Omega,\mu, G)$ satisfies the assumptions $(H0)$, $(H1)$ and $(H2)$. Then the following properties hold :

$(1)$  the functional $ \Phi_{G,\mu} $  is well defined on $ \mathcal{E}^1_m (\Omega)$ and is lower semi-continuous on  each set
$$
\mathcal{E}^1_m (\Omega,C) := \{\phi \in \mathcal{E}^1_m (\Omega,C) \, ; \, 0 \leq E_m (\phi) \leq C\},
$$
 with $C > 0$; 

$(2)$ the functional $\Phi_{G,\mu}$  is coercive  on $\mathcal{E}_m^1 (\Omega)$ i.e. there exists constants $\epsilon_0 > 0$ and $C_0 > 0$ such that for any $\phi \in \mathcal{E}_m^1 (\Omega)$,
\begin{equation} \label{eq:Properness}
 \Phi_{G,\mu} (\phi)  \geq  \epsilon_0 \, E_m (\phi) - C_0.
\end{equation}

In particular $ \Phi_{G,\mu}$ is bounded from bellow.
\end{lemma}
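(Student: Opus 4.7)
The plan is to extract from $(H2)$ a uniform polynomial growth bound on $H$, use the integrability results of Section 3 to make $\Phi_{G,\mu}$ finite and lower semi-continuous on each energy sublevel, and then exploit the Rayleigh-quotient characterization of $\lambda_1$ to obtain coercivity.

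\textbf{Step 1 (Global growth bound on $H$).} From $(H2)$ I would pick $\tilde\theta_1 \in \bigl]\theta_1,\lambda_1^m/(m+1)\bigr[$ and $T>0$ with $\sup_{z\in\Omega} H(z,t) \leq \tilde\theta_1\,|t|^{m+1}$ for all $t\leq -T$. Since $G\geq 0$, the function $t\mapsto H(z,t)$ is non-increasing on $]-\infty,0]$, so for $t\in[-T,0]$ one has $H(z,t)\leq H(z,-T)\leq \tilde\theta_1\,T^{m+1}$. This yields the uniform estimate
\[
0 \leq H(z,t) \leq C_1 + \tilde\theta_1\,|t|^{m+1}, \qquad (z,t)\in\Omega\times\R^-,
\]
with $C_1:=\tilde\theta_1 T^{m+1}$. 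Since $\mathcal{E}^1_m(\Omega)\subset L^{m+1}(\Omega,\mu)$ by Theorem \ref{thm:continuity}(1), the integral $\int_\Omega H(z,\phi(z))\,d\mu$ is finite for every $\phi\in\mathcal{E}^1_m(\Omega)$, and hence $\Phi_{G,\mu}$ is well defined.

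\textbf{Step 2 (Lower semi-continuity on $\mathcal{E}^1_m(\Omega,C)$).} Fix $C>0$ and let $(\phi_j)\subset\mathcal{E}^1_m(\Omega,C)$ converge to $\phi\in\mathcal{E}^1_m(\Omega,C)$ in $L^1_{loc}(\Omega)$. Lemma \ref{lem:FirstVariation}(3) gives $E_m(\phi)\leq\liminf_j E_m(\phi_j)$, so it suffices to prove
\[
\lim_j \int_\Omega H(z,\phi_j)\,d\mu = \int_\Omega H(z,\phi)\,d\mu.
\]
Theorem \ref{thm:continuity}(2) promotes $L^1_{loc}$-convergence to $L^{m+1}(\Omega,\mu)$-convergence on $\mathcal{E}^1_m(\Omega,C)$. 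Combined with the $\mu$-a.e.\ continuity of $t\mapsto H(z,t)$ inherited from $(H1)$ and the domination from Step 1, I would apply Vitali's convergence theorem through the standard subsequence argument to conclude this $L^1(\mu)$-convergence, whence the lower semi-continuity of $\Phi_{G,\mu}$.

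\textbf{Step 3 (Coercivity and lower bound).} Integrating the estimate of Step 1 against $\mu$ I would get
\[
\int_\Omega H(z,\phi)\,d\mu \leq C_1\,\mu(\Omega) + (m+1)\,\tilde\theta_1\,I_m(\phi).
\]
The characterization \eqref{eqR1} of $\lambda_1$ in Theorem \ref{thm:Variational1} yields $\lambda_1^m\,I_m(\phi)\leq E_m(\phi)$ for every $\phi\in\mathcal{E}^1_m(\Omega)$, so
\[
\Phi_{G,\mu}(\phi)\geq \Bigl(1-\frac{(m+1)\,\tilde\theta_1}{\lambda_1^m}\Bigr)\,E_m(\phi) - C_1\,\mu(\Omega),
\]
and one takes $\epsilon_0 := 1 - (m+1)\tilde\theta_1/\lambda_1^m > 0$ (positive precisely because the inequality in $(H2)$ is strict) and $C_0 := C_1\,\mu(\Omega)$. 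The boundedness from below of $\Phi_{G,\mu}$ then follows from $E_m\geq 0$. The main technical obstacle is Step 2: $L^1_{loc}$-convergence of $m$-subharmonic sequences gives no a priori control on the $\mu$-integral of a nonlinear expression when $\mu$ is singular with respect to Lebesgue; the strong $\Gamma$-diffuseness assumed in $(H0)$, through Theorem \ref{thm:continuity}, is exactly what upgrades that convergence to $L^{m+1}(\mu)$ and lets the Vitali step close.
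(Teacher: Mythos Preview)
Your proof is correct and follows essentially the same route as the paper: both use $(H2)$ to derive a polynomial growth bound on $H$, invoke Theorem \ref{thm:continuity} to upgrade $L^1_{loc}$-convergence to $L^{m+1}(\mu)$-convergence on energy sublevels, and combine this with the Rayleigh-quotient inequality $\lambda_1^m I_m(\phi)\leq E_m(\phi)$ to obtain coercivity with the same constant $\epsilon_0=1-(m+1)\tilde\theta_1/\lambda_1^m$. The only cosmetic differences are that the paper makes the subsequence/domination step for the continuity of $L_{H,\mu}$ explicit (building a dominant $F=|\phi'_0|+\sum|\phi'_{j+1}-\phi'_j|$ and using Lebesgue's theorem rather than citing Vitali), and that your additive constant $C_1$ in the global bound $H(z,t)\leq C_1+\tilde\theta_1|t|^{m+1}$ is actually slightly more careful than the paper's claimed $H(z,t)\leq M(-t)^{m+1}$, which need not hold near $t=0$ without such a constant.
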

\begin{proof}
1) We already know by Lemma \ref{lem:FirstVariation} that $E_m$ is well defined  and lower semi-continuous on $\mathcal{E}_m^1 (\Omega)$.
It remains to prove that the functional defined as follows 
$$
L_{H,\mu} (\phi) := \int_\Omega H(z,\phi(z)) d \mu(z)
$$ 
is  well defined on $\mathcal{E}_m^1 (\Omega)$ and  upper  semi-continuous on $ \mathcal{E}^1_m (\Omega,C)$. 
Let us first  prove that $L :=  L_{H,\mu} $ is well defined on  $\mathcal{E}_m^1 (\Omega)$.   
Indeed the condition $(H2)$ implies that there exists a constant $M > 0$ such that 
\begin{equation} \label{eq:Hinequality}
H (z,t) \leq M (- t)^{m +1},
\end{equation}
 for any $t < 0$ and $\mu$-a.e. $z \in \Omega$.
 
Since $\mathcal{E}_m^1 (\Omega) \subset L^{m+1} (\Omega,\mu)$, it follows that the functional $L_{H,\mu}$ is well defined on $\mathcal{E}^1_m (\Omega)$.

Now we prove that $ L_{H,\mu}$ is continuous on $ \mathcal{E}^1_m (\Omega,C)$ for the $L^1_{loc}(\Omega)$-topology. 
 Indeed let $(\phi_j)$ be a sequence of $ \mathcal{E}^1_m (\Omega,C)$ which converges to
 $\phi \in \mathcal{E}^1_m (\Omega,C)$ in the $L^1_{loc}(\Omega)$-topology.  By Theorem \ref{thm:continuity}, the sequence $(\phi_j)$ converges to $\phi$ in $L^{m+1}(\Omega,\mu)$. 
 We want to show that
  $$
  \lim_{j \to + \infty} L_{H,\mu} (\phi_j) = L_{H,\mu} (\phi).
  $$  
  Indeed, $(\phi_j)$ is a Cauchy sequence in $L^{m+1}(\Omega,\mu)$ and then it admits a subsequence $(\phi'_j)$ such that for any $j \in \N$
  $$
  \Vert\phi'_{j+1} - \phi'_j\Vert_{L^{m+1}(\Omega,\mu)} \leq 2^{-j}.
  $$
  Therefore $F := \vert \phi'_0\vert + \sum_{j=0}^{+\infty} \vert \phi'_{j+1} - \phi'_j \vert \in L^{m+1}(\Omega,\mu)$ and we clearly have $\vert \phi'_j\vert \leq F$ $\mu$-a.e. in $\Omega$, for any $j \in \N$.
 %It follows from measure theory that there exists a subsequence $(\phi'_j)$ and a function $F \in L^{m+1}(\Omega,\mu)$ such that $\phi'_j \to \phi$  and $0 \leq -\phi'_j \leq F$ $\mu$-a.e. in $\Omega$ for any $j\in \N$.
  
  It follows from \eqref{eq:Hinequality} that   $H (z,\phi'_j(z)) \leq M ( -\phi'_j(z))^m \leq  M F(z)^{m+1}$ for $\mu$-a.e. $z \in \Omega$ and any $j\in \N$.
  Since we still have $\phi'_j \to \phi$ in $L^{m+1}(\Omega,\mu)$, it follows from the Lebesgue convergence theorem and the continuity of $H$ in $t$, that 
  $$
  \lim_{j \to + \infty} L_{H,\mu} (\phi'_j) = L_{H,\mu} (\phi).
  $$
  By the same reasoning as above we see that any subsequence of $(\phi_j)$ satisfies the same property which means that  the sequence $(L_{H,\mu} (\phi_j))$ has a unique limit point equal to $L_{H,\mu} (\phi).$
  This proves the required statement.

\smallskip
2) To prove coercivity, observe that the condition $(H2)$ implies that $0 < \theta_1 < \lambda_1^m \slash (m +1) =: \theta$ and  $H (z,t) \leq \theta_1 \vert t\vert ^{m +1}$ for $\mu$-a.e.  $z \in \Omega$ and any $ t  \leq  t_0$, for some   $t_0 < 0 $.

Observe that   $H$ is non increasing in the last variable since $\partial_t H = - G \leq 0$. Hence for any $\phi \in \mathcal{E}_m^1 (\Omega)$, we have

\begin{eqnarray*}
\Phi_{G,\mu} (\phi) & \geq & E_m (\phi) - \theta_1  \int_{\{\phi <  t_0\}} (-\phi)^{m + 1} d \mu -  \int_\Omega H (z,t_0) d \mu (z) \\
& \geq & E_m (\phi) - \theta_1  \int_\Omega  (-\phi)^{m + 1} d \mu - C_0,
\end{eqnarray*}
where $C_0 :=   \int_\Omega H (z,t_0) d \mu (z).$

On the other hand, by definition of $\lambda_1$, we have 
$$
E_m(\phi) \geq \lambda_1^m\slash (m+1) \int_\Omega (-\phi)^{m+1} d \mu = \theta \int_\Omega (-\phi)^{m+1} d \mu.
$$
Hence
\begin{eqnarray*}
 \Phi_{G,\mu} (\phi) & \geq &  E_m (\phi) - ( \theta_1 \slash \theta) E_m (\phi) - C_0 \\
 &  \geq&  (1- \theta_1 \slash \theta) E_m (\phi) - C_0,
\end{eqnarray*}
which proves the estimate (\ref{eq:Properness}),  since $\epsilon_0 :=  (1-\theta_1 \slash \theta) >0$. 
\end{proof}
Now we can prove Theorem \ref{thm:Variational2}.
\begin{proof} Observe that by \eqref{eq:Properness}, the functional $\Phi_{G,\mu} $ is bounded from below
and 
$$
\lim_{E_m(\varphi) \to + \infty}  \Phi_{G,\mu} (\phi) = + \infty.
$$
Therefore there exists a constant $C >1$ such that
$$
\inf \{ \Phi_{G,\mu} (\phi) \, ; \, \phi  \in \mathcal{E}_m^1 (\Omega)\} = \inf \{ \Phi_{G,\mu} (\phi) \, ; \, \phi  \in \mathcal{E}_m^1 (\Omega,C)\}\cdot
$$
Since the functional $\Phi_{G,\mu}$ is lower semi-continuous on the compact set $\mathcal{E}_m^1 (\Omega,C)$, it achives its minimum at some $\varphi \in \mathcal{E}_m^1 (\Omega,C)$.

As a result we can deduce as in the proof of Theorem \ref{thm:Variational1}  that  $\varphi$ is a critical point of $\Phi_{G,\mu}$ and then $\varphi$ is a solution to the equation (\ref{eq:DirPro}).

\smallskip

Let us prove that when $G(z,0) \equiv 0$ in $\Omega$, the condition $(H3)$ implies that  $\varphi \not \equiv 0$ in $\Omega$.  More precisely   we will prove that the condition $(H3)$ implies that the infimum of $\Phi_{G,\mu}$ is negative.
 
 Indeed,  let $u$ be a non trivial solution of the eigenvalue problem (\ref{eq:VP-Hessian}), which implies that  $E (u) = \theta  \int_\Omega (-u)^{m+1} d \mu$.

By $(H3)$ there exists  $ \theta_2  > 0$ such that $\liminf_{t \to 0^+} \frac{ H (z,t)}{t^{m + 1}} > \theta_2 > \theta$ for $\mu$-a.e. $z \in \Omega$.

 Therefore for any $t < 0$ we have
\begin{eqnarray*}
\Phi_{G,\mu} (tu) & = & (-t)^{m+1} E_m (u) - \int_\Omega H(z,t u(z)) d \mu (z) \\
&=& \theta  (-t)^{m+1}  \int_\Omega (-u)^{m+1} d \mu -  \int_\Omega  H (z,t u(z)) d \mu (z).
\end{eqnarray*}
Observe that for $t < 0$, we have
$$
\int_\Omega H (z,t u) d \mu =  \int_\Omega \theta   (t u)^{m+1} (H (z,t u) \slash \theta  ( t u)^{m+1})  d \mu (z),
$$
hence
$$
\Phi_{G,\mu} (tu) = \theta  (-t)^{m+1}  \int_\Omega (-u)^{m+1}  (1 - h (z,t)) d \mu (z),
$$
where 
$$
h (z,t) := (H (z,t u (z)) \slash \theta  (t u (z))^{m+1}) 
$$
is a uniformly bounded function on $\Omega$ by \eqref{eq:Hinequality} and satisfies  $\lim_{t \to 0^-} h(z,t) \geq \theta_2 \slash \theta > 1$  for $\mu$-a.e. $z \in \Omega$.

By the Fatou lemma we conclude that
$$
\limsup_{t \to 0^-}  \,  (-t)^{- m-1} \Phi_{G,\mu} (tu)  \leq (\theta-  \theta_2)  \int_\Omega (-u)^{m+1} d \mu <  0,
$$
since $\theta_2 > \theta$. This implies that for $t< 0$ small enough, $\Phi_{G,\mu} (t u) < 0$, hence $\Phi_{G,\mu} (\varphi) = \inf \Phi_{G,\mu} < 0$. Which proves that $\varphi \not \equiv 0$ in $\Omega$.

Now  prove that $\varphi $  is H\"older continuous and $\varphi \equiv 0$ in $\partial \Omega$ under the assumptions of the statement 3, we use the same argument as in the proof of Theorem \ref{thm:Variational1} based on Lemma \ref{lem:Integ}. 
\end{proof}

\subsection{Proof of Theorem 1.2 }
Here we will apply  Theorem \ref{thm:Variational2}  to prove Theorem 1.2.

\begin{proof}
We want apply  Theorem \ref{thm:Variational2} with a function $G (z,t) = f(z,t)^m$ for $(z,t) \in \Omega \times ]-\infty, 0]$. 

The hypothesis $(H1)$ is clearly satisfied. Let us prove that the hypothesis $(H2)$ is satisfied. Indeed,
since $\partial_t f(z,t) \geq - \lambda_0$, it follows that for $\mu$-a.e. $z \in \Omega$ and $t < 0$, 
$$
f (z,t) =  \int_0^t \partial_t f(z,s) d s \leq f(z,0) -  \lambda_0 t.
$$
Hence  for $\mu$-a.e. $z \in \Omega$ and $t < 0$,
$$
G (z,t) = f(z,t)^m \leq (M_0 -  \lambda_0 t)^m,
$$ 
where $M_0 := \Vert f(\cdot,0)\Vert_{L^\infty(\Omega)}$.

Therefore  for $\mu$-a.e. $z \in \Omega$ and $t < 0$,
$$
H(z,t) = \int_t^0 G(z,s) d s \leq  \frac{1}{\lambda_0(m+1)} (M_0 - \lambda_0 t)^{m +1}.
$$
The hypothesis $(H2)$ follows since $\lambda_0 < \lambda_1$.
Therefore the theorem follows from Theorem \ref{thm:Variational2}.
\end{proof}

\subsection{Applications}

We can deduce two interesting corollaries.

\begin{corollary} \label{coro:deformation1} Assume that $(\Omega,\mu)$ satisfies the hypothesis $(H0)$. Then for any number  $0 < \lambda < \lambda_1 = \lambda_1 (m,\Omega,\mu)$, the Hessian equation
\begin{equation}  \label{eq:deformation}
(dd^c u)^m \wedge \beta^{n-m} = (1 - \lambda u)^m \mu,
\end{equation}
admits a solution $u \in  \mathcal{E}^1_m (\Omega)$.

Moreover if $\Omega$ is strictly $m$-pseudoconvex and $\mu = g \beta^n$  with $g \in L^p(\Omega)$, where $(m,p )$ satisfy the conditions \eqref{eq:Holder},  then  $u \in \mathcal C^{\alpha} (\bar\Omega)$ for some $\alpha \in ]0,1[$ and   $u\mid_{\partial \Omega} \equiv 0$.
\end{corollary}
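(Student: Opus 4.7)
The plan is to reduce this corollary directly to Theorem 1.2 (and its Hölder continuity part) by making the correct choice of the nonlinearity $f$. Set
\[
f : \Omega \times \, ]-\infty,0] \longrightarrow \R^+, \qquad f(z,t) := 1 - \lambda t.
\]
Since $t \leq 0$ and $\lambda > 0$, we have $f \geq 1 > 0$, so $f$ genuinely takes values in $\R^+$. Then $(dd^c u)^m \wedge \beta^{n-m} = (1-\lambda u)^m \mu$ is exactly the equation $(dd^c u)^m \wedge \beta^{n-m} = f(\cdot,u)^m \mu$ of Theorem 1.2.

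First I verify the two structural hypotheses on $f$ required by Theorem 1.2. (i) For every fixed $t \leq 0$, $f(\cdot,t) \equiv 1 - \lambda t$ is a constant, hence trivially in $L^\infty(\Omega,\mu)$. (ii) For every $z \in \Omega$, the map $t \mapsto f(z,t)$ is affine, so differentiable on $]-\infty,0[$ with $\partial_t f(z,t) = -\lambda$. Setting $\lambda_0 := \lambda$, the assumption $\lambda < \lambda_1$ gives $\lambda_0 < \lambda_1$, and the uniform lower bound $\partial_t f(z,t) = -\lambda \geq -\lambda_0$ holds (with equality). Since $(\Omega,\mu)$ satisfies $(H0)$ by assumption, Theorem 1.2 applies and yields a function $u \in \mathcal E^1_m(\Omega)$ solving $(dd^c u)^m \wedge \beta^{n-m} = (1-\lambda u)^m \mu$ in the sense of measures on $\Omega$.

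For the second statement, the additional hypotheses ($\Omega$ strictly $m$-pseudoconvex, $\mu = g \beta^n$ with $g \in L^p(\Omega)$, $(m,p)$ satisfying \eqref{eq:Holder}) are precisely the hypotheses under which the second half of Theorem 1.2 upgrades the solution of the Hessian equation to a solution $u \in \mathcal{SH}_m(\Omega) \cap C^{\alpha}(\bar\Omega)$ for some $\alpha \in \, ]0,1[$ of the full Dirichlet problem \eqref{eq:VP-Hessian2}. In particular $u \in C^\alpha(\bar\Omega)$ and $u \equiv 0$ on $\partial\Omega$, which is exactly the conclusion.

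There is essentially no analytical obstacle to overcome: the entire content of the corollary is packaged in Theorem 1.2, and the only thing to check is that the strict inequality $\lambda < \lambda_1$ on the eigenvalue translates into the required bound $\lambda_0 < \lambda_1$ on $-\partial_t f$. Had the corollary allowed $\lambda = \lambda_1$, the argument would fail, which is consistent with the fact that at $\lambda = \lambda_1$ the coercivity estimate \eqref{eq:Properness} underlying Theorem \ref{thm:Variational2} just barely breaks down.
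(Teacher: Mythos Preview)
Your proof is correct. The paper's own proof applies Theorem~\ref{thm:Variational2} directly with $G(z,t)=(1-\lambda t)^m$ and checks $(H1)$--$(H2)$, whereas you apply Theorem~1.2 with $f(z,t)=1-\lambda t$; since Theorem~1.2 is itself deduced from Theorem~\ref{thm:Variational2}, the two routes are essentially the same, and your verification of the derivative bound $\partial_t f=-\lambda\geq -\lambda_0$ with $\lambda_0=\lambda<\lambda_1$ is exactly what drives the check of $(H2)$ in the paper's argument.
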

\begin{proof}  Apply the previous theorem with $G (z,t) = (1 -  \lambda t)^m$ for $(z \in \Omega$ and $t \leq 0$), and so 
$$
H (z,t) = (1 - \lambda t)^{m +1} \slash (\lambda (m + 1)), \, (z,t) \in \Omega \times ]-\infty,0].
$$ 
It is clear that the conditions $(H_1)$  and $(H2)$  are satisfied if $\lambda  < \lambda_1$.
\end{proof}

\begin{corollary} \label{coro:deformation2} Assume that $(\Omega,\mu)$ satisfies the hypothesis $(H0)$ and  let  $0 < k < m$, $a \geq 0$ and  $\lambda >0$ be fixed. Then the Hessian equation
$$
(dd^c u)^m \wedge \beta^{n-m} = (a - \lambda u)^k \mu,
$$
admits a non trivial solution $u \in  \mathcal{E}^1_m (\Omega)$.

Moreover if $\Omega$ is strictly $m$-pseudoconvex and $\mu = g \beta^n$ with $g \in L^p(\Omega)$, where $(m,p )$ satisfy the conditions \eqref{eq:Holder},  then  $u \in \mathcal C^{\alpha} (\bar\Omega)$ for some $\alpha \in ]0,1[$ and   $u\mid_{\partial \Omega} \equiv 0$.
\end{corollary}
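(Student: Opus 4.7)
The plan is to apply Theorem~\ref{thm:Variational2} with the choice $G(z,t) := (a - \lambda t)^k$. Since $G$ is independent of $z$ and continuous in $t$ on $]-\infty,0]$, hypothesis $(H1)$ is immediate. A direct integration gives
$$
H(z,t) = \int_t^0 (a - \lambda s)^k\, ds = \frac{(a - \lambda t)^{k+1} - a^{k+1}}{\lambda(k+1)},
$$
which grows like $\lambda^k |t|^{k+1}/(k+1)$ as $t \to -\infty$. Because $k + 1 < m + 1$ by the assumption $k < m$, the ratio $H(z,t)/|t|^{m+1}$ tends to $0$ uniformly in $z$, so $(H2)$ holds with any $\theta_1 \in (0,\lambda_1^m/(m+1))$. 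Theorem~\ref{thm:Variational2}(1)--(2) then yields a solution $u \in \mathcal{E}_m^1(\Omega)$.

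For non-triviality I would split into two cases. If $a > 0$, then $u \equiv 0$ cannot solve the equation, since the right hand side $a^k \mu$ is a non-zero positive measure while the left hand side vanishes; hence the solution produced above is automatically non-trivial. If $a = 0$, then $G(z,0) = 0$ and I would invoke Theorem~\ref{thm:Variational2}(4): the required condition $(H3)$ reduces in this case to
$$
\frac{H(z,t)}{|t|^{m+1}} = \frac{\lambda^k}{k+1} |t|^{k-m} \longrightarrow +\infty \quad \text{as } t \to 0^-,
$$
which holds precisely because $k < m$.

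The main subtlety is establishing Hölder regularity under the extra hypotheses, since $(a - \lambda t)^k$ fails the bound $G(z,t) \leq M_0 |t|^m$ near $t = 0$ when $a > 0$, so Theorem~\ref{thm:Variational2}(3) cannot be quoted verbatim. I would bypass this by directly checking the integrability of the density along the lines of the end of the proof of Theorem~\ref{thm:Variational1}: the elementary inequality $(a - \lambda u)^k \leq C(1 + (-u)^m)$, valid for $u \leq 0$ and $0 < k < m$ with $C = C(a,\lambda,k,m)$, yields
$$
(a - \lambda u)^k\, g \leq C\, g + C\, (-u)^m g.
$$
The first term lies in $L^p(\Omega)$, while Lemma~\ref{lem:Integ} provides some $r > n/m$ (with $r \leq p$) such that $(-u)^m g \in L^r(\Omega)$. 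Hence the full density belongs to $L^r(\Omega)$ with $r > n/m$. Invoking \cite{Ch16} produces $v \in \mathcal{SH}_m(\Omega) \cap C^{\alpha}(\bar\Omega)$ solving $(dd^c v)^m \wedge \beta^{n-m} = (a - \lambda u)^k g\, \beta^n$ with $v|_{\partial\Omega} = 0$, and uniqueness in $\mathcal{E}_m^1(\Omega)$ forces $v = u$, concluding the argument.
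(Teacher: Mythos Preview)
Your proposal is correct and follows the same route as the paper: apply Theorem~\ref{thm:Variational2} with $G(z,t)=(a-\lambda t)^k$, check $(H1)$, $(H2)$, and treat non-triviality and regularity. The paper's own proof is very terse (it just asserts that $(H1)$, $(H2)$, $(H3)$ hold), and your version is in fact more careful in two spots. First, when $a>0$ one has $G(z,0)=a^k\neq 0$, so $(H3)$ and part~(4) of Theorem~\ref{thm:Variational2} do not apply; your observation that $u\equiv 0$ cannot satisfy the equation (the right-hand side being $a^k\mu\neq 0$) is the correct way to get non-triviality in that case. Second, for $a>0$ the polynomial-growth hypothesis of Theorem~\ref{thm:Variational2}(3) fails near $t=0$, and your workaround via the elementary bound $(a-\lambda u)^k\le C(1+(-u)^m)$ together with Lemma~\ref{lem:Integ} and \cite{Ch16} is exactly the right patch, mirroring the end of the proof of Theorem~\ref{thm:Variational1}.
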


\begin{proof}  Apply the previous theorem with $G (z,t) = (a-\lambda t)^k$ for $(z,t) \in \Omega \times ]-\infty,0]$, and so
$$
H(z,t) = (a - \lambda t)^{k +1} \slash (\lambda (k + 1)), (z,t) \in \Omega \times ]-\infty,0].
$$
 We see in the same way that the conditions $(H_1)$, $(H2)$ and $(H3)$ are satisfied since $0 < k < m$. 
\end{proof} 
 \smallskip
 \smallskip

{\bf Question 1 :} Is the eigenvalue  in Theorem  \ref{thm:Variational1} simple i.e. the eigenfunction is unique up to a positive multiplicative constant ? 

\smallskip
\smallskip
{\bf Question 2 :} In the general case when $m < n$ and $\mu = g \beta^n$, where $g \in L^p (\Omega)$ with $p > n\slash m$, is the eigenvalue  in Theorem  \ref{thm:Variational1} bounded, continuous or H\"older continuous in $\bar \Omega$ ? The same question can be asked for the solution given by Theorem 5.1 and its corollaries.

\smallskip
\smallskip

{\bf Question 3 :} Is the solution provided by Corollary \ref{coro:deformation1} (resp. Corollary \ref{coro:deformation2})   unique ?

When $\mu := g \beta^n$ with $0 < g \in C^{\infty} (\bar \Omega)$, it follows from \cite{BaZe23} that  the answer to the first and second question is positive when $m=n$. 
We believe that this is  still true in general. 

\smallskip
\smallskip

{\bf Aknowledgements :}  
The authors would like to thank the referee for his careful reading of the first version of this article and for his useful comments and suggestions which made it possible to correct certain statements and improve the presentation of the article.


\begin{thebibliography}{X-XX}
 \bibitem[ACC12]{ACC12}  P. {\AA}hag,  U. Cegrell, R. Czyż : {\it On Dirichlet's principle and problem}. 
Math. Scand. 110 (2012), no. 2, 235-250.

 \bibitem[AC20]{AC20}  P. {\AA}hag,  R. Czyz : {\it Poincar\'e and Sobolev-Type Inequalities for Complex m-Hessian Equations}. Results Math (2020),  75-63.
  

 \bibitem[BBGZ13]{BBGZ13} R. Berman, S. Boucksom, V. Guedj, A. Zeriahi : {\it A variational approach to complex Monge-Amp\`ere equations.} Publ. Math. Inst. Hautes Études Sci. 117, 179-245  (2013).
 
 \bibitem[BT76]{BT76} E. Bedford, B.A.  Taylor : {\it  The Dirichlet problem for a complex Monge-Amp\`ere equation.} Invent. Math. 37 (1976), no. 1, 1-44.
 
 \bibitem[BT82]{BT82} E. Bedford, B.A.  Taylor : {\it  A new capacity for plurisubharmonic functions.} Acta Math. 149 (1982), no. 1-2, 1-40. 
 
 
 
 \bibitem[BaZe23]{BaZe23} P. Badiane, A. Zeriahi : {\it  The eigenvalue problem for the complex Monge-Ampère operator.}  J. Geom. Anal. 33 (2023), no. 12, 367.
  
 \bibitem[BeZe20]{BeZe20} A. Benali, A. Zeriahi : {\it  The H\"older continuous subsolution theorem for complex Hessian equations.} J. Éc. polytech. Math. 7 (2020), 981–1007.  
 
 \bibitem[BJZ05]{BJZ05} S. Benelkourchi, B. Jennane, A. Zeriahi : {\it Polya's inequalities, global uniform integrability and the size of plurisubharmonic lemniscates}.  Ark. Mat. 43(1)(2005),  85-112.

 \bibitem[Bl93]{Bl93} {Z. B\l ocki} : {\it Estimates for the complex Monge-Amp\`ere operator}. Bull. Polish Acad. Sc. Math., 43, no 2 (1993), 151-157.
 
\bibitem[Bl05]{Bl05} {Z. B\l ocki} : {\it  Weak solutions to the complex Hessian equation.}  Ann. Inst. Fourier (Grenoble) 55(5), (2005) 1735-1756.
 
\bibitem[Ceg84]{Ceg84} {U. Cegrell} : {\it  On the Dirichlet Problem
for the Complex Monge-Ampère Operator}. Math.Z. 185, 247-251(1984).

\bibitem[Ceg98]{Ceg98} {U. Cegrell} : {\it Pluricomplex energy.} Acta Math. 180 (1998), no. 2, 187–217. 


\bibitem[Ch15]{Ch15} M. Charabati : {\it Hölder regularity for solutions to complex Monge-Amp\`ere equations.}
Ann. Polon. Math. 113 (2),  (2015) 109-127.

 \bibitem[Ch16]{Ch16} M. Charabati : {\it Modulus of continuity of solutions to complex Hessian equations}. Internat. J. Math. 27 (2016), no. 1, 24 pp.
 
  \bibitem[CZ23]{CZ23} M. Charabati, A. Zeriahi : {\it  The  continuous subsolution theorem for complex Hessian equations.} Preprint arXiv:2007.10194 (to appear in IUMJ).

 
 \bibitem[CKZ11]{CKZ11}  U. Cegrell, S. Kolodziej,  A. Zeriahi : {\it  Maximal subextensions of plurisubharmonic functions. }Ann. Fac. Sci. Toulouse Math. 20 (6), 101–122  (2011).
 
\bibitem[DK14]{DK14} S. Dinew, S. Ko\l dziej :  {\it  A priori estimates for the complex Hessian equation.}  Anal. PDE 7 (2014), no. 1, 227-244. 

\bibitem[GZ07]{GZ07} V.  Guedj, A. Zeriahi : {\it The weighted Monge-Amp\`ere energy of quasi-plurisubharmonic functions}. J. Funct. Anal. 250 (2007), no. 2, 442–482.


\bibitem[GZ17]{GZ17} V.  Guedj, A. Zeriahi : {\it Degenerate complex Monge-Amp\`ere equations}.   EMS Tracts in Mathematics, 26. European Mathematical Society (EMS), Z\"urich, 2017. 
 
\bibitem[K98]{K98}  S.  Ko\l dziej, {\it The complex Monge-Ampère equation.} Acta Math. 180 (1998), no. 1, 69–117.   
    
\bibitem[Lu15]{Lu15}  C.H. Lu : {\it A variational approach to complex Hessian equations in $\C^n$}. J. Math. Anal. Appl. 431 (2015), no. 1, 228-259.
  
 \bibitem[Po16]{Po16} {S. Pons} : {\it Elliptic PDEs, Measures and Capacities. From the Poisson Equation to Nonlinear Thomas-Fermi Problems}. EMS Tracts in Mathematics, 23 (2016).


\bibitem[SaAb13]{SaAb13} A. Sadullaev,  B.  Abdullaev : {\it  Capacities and Hessians in the class of m-subharmonic functions.} (Russian) Dokl. Akad. Nauk 448 (2013), no. 5, 515-517; translation in Dokl. Math. 87 (2013), no. 1, 88-90.

 \bibitem[Si81]{Si81} J. Siciak  : {\it  Extremal plurisubharmonic functions in $\C^n$.} Ann. Pol. Math. 39 (1981), 175-211.
	
 \bibitem[T90]{T90} K. Tso : {\it On a real Monge-Amp\`ere functional.} Invent. Math. 101 (1990), 425-448.
 
 \bibitem[W94]{W94}  X.J. Wang : {\it  A class of fully nonlinear elliptic equations and related functionals.} Indiana Univ. Math. J. 43 (1994) 25-54.
  
\bibitem[WZ22]{WZ22} J. Wang,  B.  Zhou : {\it Trace inequalities, Isocapacitary inequalities and regularity of the complex Hessian equations}, arXiv:2201.02061v1.

 \end{thebibliography}
 \end{document}